\documentclass[11pt]{amsart}
\usepackage{amssymb}
\usepackage{amsthm}
\usepackage{amsmath}
\usepackage{latexsym}
\usepackage{comment}
\usepackage{hyperref}
\usepackage{verbatim}
\usepackage{xypic}
\usepackage{graphicx}
\usepackage[utf8]{inputenc}
\usepackage[margin=1in]{geometry}
\newtheorem{thm}{Theorem}[section]
\newtheorem{prop}[thm]{Proposition}

\newtheorem{lemma}[thm]{Lemma}

\theoremstyle{definition}

\theoremstyle{definition} \newtheorem{rmk}[thm]{Remark}

\newcommand{\cc}{\mathbb{C}}

\newcommand{\qq}{\mathbb{Q}}
\newcommand{\zz}{\mathbb{Z}}
\newcommand{\ff}{\mathbb{F}}

\newcommand{\Gal}{\mathrm{Gal}}

\newcommand{\SL}{\mathrm{SL}}

\newcommand{\Aut}{\mathrm{Aut}}
\newcommand{\Spec}{\mathrm{Spec}}

\newcommand{\ab}{\mathrm{ab}}

\newcommand{\unr}{\mathrm{unr}}

\bibliographystyle{plain}
\graphicspath{{pics/}}

\title{A note on $8$-division fields of elliptic curves}
\author{Jeffrey Yelton}

\begin{document}

\maketitle

\begin{abstract}

Let $K$ be a field of characteristic different from $2$ and let $E$ be an elliptic curve over $K$, defined either by an equation of the form $y^{2} = f(x)$ with degree $3$ or as the Jacobian of a curve defined by an equation of the form $y^{2} = f(x)$ with degree $4$.  We obtain generators over $K$ of the $8$-division field $K(E[8])$ of $E$ given as formulas in terms of the roots of the polynomial $f$, and we explicitly describe the action of a particular automorphism in $\Gal(K(E[8]) / K)$.

\end{abstract}

Let $K$ be any field of characteristic different from $2$, and let $E$ be an elliptic curve over $K$.  For any integer $N \geq 1$, we write $E[N]$ for the $N$-torsion subgroup of $E$ and $K(E[N])$ for the (finite algebraic) extension of $K$ obtained by adjoining the coordinates of the points in $E[N]$ to $K$.  Let $T_{2}(E)$ denote the $2$-adic Tate module of $E$; it is a free $\zz_{2}$-module of rank $2$ given by the inverse limit of the finite groups $E[2^{n}]$ with respect to the multiplication-by-$2$ map.  The absolute Galois group $G_{K} = \Gal(\bar{K} / K)$ of $K$ acts in a natural way on each free rank-$2$ $\zz / 2^{n}\zz$-module $E[2^{n}]$; we denote this action by $\bar{\rho}_{2^{n}} : G_{K} \to \Aut(E[2^{n}])$. This induces an action of $G_{K}$ on $T_{2}(E)$, which we denote by $\rho_{2} : G_{K} \to \Aut(T_{2}(E))$.

The purpose of this note is to provide formulas for generators of the $8$-division field $K(E[8])$ of an elliptic curve $E$ and to describe how a certain Galois element in $\Gal(K(E[8]) / K)$ acts on these generators.  We will consider the case where $E$ is given by a standard Weierstrass equation of the form $y^{2} = \prod_{i = 1}^{3} (x - \alpha_{i}) \in K[x]$ (the ``degree-$3$ case") and the case where $E$ is the Jacobian of the genus-$1$ curve given by an equation of the form $y^{2} = \prod_{i = 1}^{4} (x - \alpha_{i}) \in K[x]$ (the ``degree-$4$ case"), where in both cases the elements $\alpha_{i} \in \bar{K}$ are distinct.

For the statement of the main theorem and the rest of this article, we fix the following algebraic elements over $K$ (for ease of notation, we will treat indices $i$ as elements of $\zz / 3\zz$).  In the degree-$3$ case, for each $i \in \zz / 3\zz$, we choose an element $A_{i} \in \bar{K}$ whose square is $\alpha_{i + 1} - \alpha_{i + 2}$.  In the degree-$4$ case, for each $i \in \zz / 3\zz$, we choose an element $A_{i} \in \bar{K}$ whose square is $(\alpha_{i} - \alpha_{4})(\alpha_{i + 1} - \alpha_{i + 2})$.  One checks that in either case, we have the identity 
\begin{equation}\label{eq identity of A_{i}'s}
A_{1}^{2} + A_{2}^{2} + A_{3}^{2} = 0,
\end{equation}
which we will exploit below.

In the degree-$3$ case, it is well known that $K(E[2]) = K(\alpha_{1}, \alpha_{2}, \alpha_{3})$.  Menawhile, in the degree-$4$ case, the extension $K(E[2]) / K$ is generated by polynomials in the roots $\alpha_{1}, \alpha_{2}, \alpha_{3}, \alpha_{4}$ which are fixed by the group of permutations in $S_{4}$ that fix all partitions of the roots into $2$-element subsets.  This follows from a well-known description of the $2$-torsion points of the Jacobian of a hyperelliptic curve (see for instance the statement and proof of \cite[Corollary 2.11]{mumford1984tata}) which says that the points in $E[2]$ are parametrized by partitions of the set of roots $\{\alpha_{i}\}_{i = 1}^{4}$ into even-cardinality subsets, and that $G_{K}$ acts on $E[2]$ via the Galois action on these partitions determined by permutation of the $\alpha_{i}$'s.  In fact, it is clear (from examining, for instance, the solution to the ``generic" quartic equation via the resolvent cubic) that $K(E[2])$ coincides with $K(\gamma_{1}, \gamma_{2}, \gamma_{3})$, where $\gamma_{i} = (\alpha_{i + 1} + \alpha_{i + 2})(\alpha_{i} + \alpha_{4})$ for $i \in \zz / 3\zz$; note that $A_{i}^{2} = \gamma_{i + 1} - \gamma_{i + 2}$ for each $i$.  Thus, in either case, we have $A_{1}^{2}, A_{2}^{2}, A_{3}^{2} \in K(E[2])$.

Now for each $i \in \zz / 3\zz$, fix an element $B_{i} \in \bar{K}$ whose square is $A_{i}(A_{i + 1} + \zeta_{4}A_{i + 2})$.  Let $\zeta_{8} \in \bar{K}$ be a primitive $8$th root of unity, and let $\zeta_{4} = \zeta_{8}^{2}$, which is a primitive $4$th root of unity.  Our result is as follows.

\begin{thm} \label{thm 8-torsion}

a) We have $K(E[4]) = K(E[2], \zeta_{4}, A_{1}, A_{2}, A_{3})$ and $K(E[8]) = K(E[4], \zeta_{8}, B_{1}, B_{2}, B_{3})$.

b) If the scalar automorphism $-1 \in \Aut(E[8])$ lies in the image under $\bar{\rho}_{8}$ of some Galois element $\sigma \in G_{K}$, then $\sigma$ acts on $K(E[8])$ by fixing $K(E[2], \zeta_{8})$ and changing the sign of each generator $A_{i}, B_{i} \in \bar{K}$.

\end{thm}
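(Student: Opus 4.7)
The plan is to prove both parts of Theorem~\ref{thm 8-torsion} in tandem, since part (b) relies on the explicit torsion-point formulas derived in proving part (a).

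For part (a), in the degree-3 case I would apply the duplication formula to check that a $4$-torsion point $P$ with $2P = (\alpha_i, 0)$ has $x$-coordinate $\alpha_i + \zeta_4 A_{i+1} A_{i+2}$, and then, after factoring each $x(P) - \alpha_j$ through the identities $\alpha_{i+1} - \alpha_{i+2} = A_i^2$, read off that $y(P)$ takes the form $\pm A_{i+1} A_{i+2}(A_{i+1} - \zeta_4 A_{i+2})$ (up to rearrangement of $\zeta_4$). Halving $P$ then yields an $8$-torsion point whose coordinates involve $\zeta_8$ and the square roots $B_i$. The degree-4 case proceeds analogously, with the $\gamma_i$'s playing the role of the $\alpha_i$'s. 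One direction of each equality in (a) is immediate from the coordinate formulas; for the reverse, the two $4$-torsion $y$-coordinates lying over a common $2$-torsion point assemble into a pair of $K(E[2], \zeta_4)$-linear combinations of $A_j$'s from which each individual $A_j$ can be solved out, and similarly for the $B_i$'s at the $8$-torsion level.

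For part (b), two reductions are immediate. First, $\sigma$ acts on $E[2]$ as $-1$, which is the identity on $2$-torsion, so $\sigma$ fixes $K(E[2])$. Second, the mod-$8$ cyclotomic character factors as $\det \circ \bar{\rho}_{8}$ via the Weil pairing, and $\det(-I) = 1$, so $\sigma$ fixes $\zeta_8$ and hence $\zeta_4 = \zeta_8^2$. Since $A_i^2 \in K(E[2])$ is fixed by $\sigma$, we have $\sigma(A_i) = \epsilon_i A_i$ for some $\epsilon_i \in \{\pm 1\}$; and once all the $\epsilon_i$ are shown to equal $-1$, a direct check using $\sigma(\zeta_4) = \zeta_4$ shows that $\sigma$ fixes each $B_i^2 = A_i(A_{i+1} + \zeta_4 A_{i+2})$ as well, so $\sigma(B_i) = \eta_i B_i$ with $\eta_i \in \{\pm 1\}$.

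To pin down the signs, apply $\sigma$ to the coordinates of the explicit torsion points. Because $\sigma(P) = -P$ for $P \in E[4]$, and $-P$ shares the $x$-coordinate of $P$ but negates its $y$-coordinate, the relation $\sigma(y(P)) = -y(P)$ must hold. Substituting $y(P) = \pm A_{i+1} A_{i+2}(A_{i+1} - \zeta_4 A_{i+2})$ and expanding through the $\epsilon_j$'s, the factor $A_{i+1} - \zeta_4 A_{i+2}$ is preserved up to overall sign only when $\epsilon_{i+1} = \epsilon_{i+2}$, and in that case a short computation shows that the entire product transforms by $+1$ when both equal $+1$ and by $-1$ when both equal $-1$; only the latter is consistent with $\sigma(y(P)) = -y(P)$. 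Varying $i$ forces $\epsilon_j = -1$ for all $j$, and a parallel analysis with $8$-torsion $y$-coordinates (linear in the $B_i$'s over $K(E[4], \zeta_8)$) yields $\eta_i = -1$. The main obstacle will be the explicit torsion-point computations in part (a), particularly in the degree-4 Jacobian case where the formulas are less standard; once those are in hand, the sign determination in part (b) reduces to the observation that a linear combination like $A_{i+1} - \zeta_4 A_{i+2}$ does not survive asymmetric sign changes among the $A_j$'s.
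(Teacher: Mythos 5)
Your route---direct computation of $4$- and $8$-torsion points via the duplication/halving formulas---is genuinely different from the paper's, which reduces to the generic (transcendental-roots) case, identifies $\Gal(K^{\unr}/K(E[2]))$ with the profinite pure braid group $\widehat{P}_{3}$, and then pins down $K(E[8])$ as the field corresponding to the \emph{unique} normal subgroup of $\widehat{P}_{3}$ with quotient isomorphic to $\Gamma(2)/\Gamma(8)$ (Lemmas \ref{lemma unique quotient} and \ref{lemma Galois group}). As written, however, your plan has two genuine gaps. First, the degree-$4$ case does not ``proceed analogously with the $\gamma_{i}$'s playing the role of the $\alpha_{i}$'s'': there $E$ is defined as a Jacobian, not by a Weierstrass equation, so before any coordinate computation you must exhibit an explicit Weierstrass model and prove it is $y^{2}=\prod_{i}(x-\gamma_{i})$ up to an isomorphism over $K(E[2])$---in particular you must rule out a nontrivial quadratic twist, which would alter $K(E[4])$ and $K(E[8])$ while leaving $K(E[2])$ unchanged. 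The paper sidesteps this entirely through Lemma \ref{lemma key}(d), which makes the two cases formally identical above level $2$.

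Second, and more seriously, the step you dismiss with ``similarly for the $B_{i}$'s at the $8$-torsion level'' is the crux of the theorem. For the inclusion $K(E[8])\subseteq K(E[4],\zeta_{8},B_{1},B_{2},B_{3})$ you must check that \emph{every} point of order $8$ (there are $48$, lying over twelve points of exact order $4$) has coordinates in that field; the square roots $r_{j}$ of $x(P)-\alpha_{j}$ produced by the halving algorithm are not the $B_{i}$ themselves but combinations such as $B_{i}\pm B_{i}'$ and $B_{i}\pm\zeta_{4}B_{i}'$ (where $B_{i}B_{i}'=\zeta_{4}A_{i}^{2}$), and identifying each of these inside the claimed field is a nontrivial computation---compare Remark \ref{rmk computations}, which the paper only needs \emph{after} Proposition \ref{prop 8-torsion generators} is established. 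For the reverse inclusion you must extract each individual $B_{i}$ from symmetric functions of the $r_{j}$, which forces divisions by expressions whose nonvanishing must be verified. None of this is parallel to the $A_{i}$ computation at level $4$, and the paper's entire group-theoretic apparatus exists precisely to avoid it. By contrast, your argument for part (b) is essentially sound once part (a) and one explicit point of order $8$ are in hand (the degree count on the $A_{i}$'s, $B_{i}$'s, $B_{i}'$'s forcing $\sigma(Q)=-Q$), and it closely parallels the computation in the paper's Proposition \ref{prop action of -1}.
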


\begin{rmk} \label{rmk dzb}

a) Rouse and Zureick-Brown have computed the full $2$-adic Galois images of all elliptic curves over $\qq$ in \cite{rouse2015elliptic}; in particular, their database can be used to find the image of $\bar{\rho}_{8}$ for any elliptic curve over $\qq$.  Our result allows one to view these mod-$8$ Galois images somewhat more explicitly.

b) For certain elliptic curves, it is possible to determine using various methods that the image of $\rho_{2}$ contains $\Gamma(8)$.  See \cite[Example 4.3]{yelton2017boundedness}, which shows this for elliptic curves in Legendre form whose Weierstrass roots satisfy certain arithmetic conditions; e.g. $y^{2} = x(x - 1)(x - 10)$.  One can then use our result to determine the full $2$-adic image in these cases.

\end{rmk}

The rest of this article is devoted to proving Theorem \ref{thm 8-torsion}.  We begin by justifying a simplifying assumption about the ground field $K$.  From now on, the superscript ``$S_{d}$" over a ring containing independent transcendental variables $\tilde{\alpha}_{1}, ... , \tilde{\alpha}_{d}$ indicates the subring of elements fixed under all permutations of the variables $\tilde{\alpha}_{i}$.

\begin{lemma} \label{lemma descent}

To prove Theorem \ref{thm 8-torsion} for the degree-$d$ case, it suffices to prove the statement when $K = \cc(\{\tilde{\alpha}_{i}\}_{i = 1}^{d})^{S_{d}}$ and the set of roots defining $E$ consists of the transcendental elements $\tilde{\alpha}_{i} \in \bar{K}$.

\end{lemma}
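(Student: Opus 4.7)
The argument is by specialization from the generic setting. Let $\tilde K := \cc(\tilde\alpha_1, \ldots, \tilde\alpha_d)^{S_d}$, let $\tilde E / \tilde K$ be the generic elliptic curve, fix generic analogues $\tilde A_i, \tilde B_i \in \overline{\tilde K}$ of the square roots in the theorem statement, and assume Theorem \ref{thm 8-torsion} has been established for $\tilde E / \tilde K$. Set
\[
R_0 := \zz\bigl[\tfrac12, \tilde\alpha_1, \ldots, \tilde\alpha_d, \textstyle\prod_{i < j}(\tilde\alpha_i - \tilde\alpha_j)^{-1}\bigr]^{S_d} \subset \tilde K,
\]
and let $\tilde R$ be the integral closure of $R_0$ in the finite extension $\tilde L := \tilde K(\tilde E[8], \zeta_8, \tilde A_1, \tilde A_2, \tilde A_3, \tilde B_1, \tilde B_2, \tilde B_3)$. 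The coordinates of all $\tilde E[8]$-points, together with $\zeta_8$ and the $\tilde A_i, \tilde B_i$, are integral over $R_0$ and hence lie in $\tilde R$.

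Given any field $K$ of characteristic $\neq 2$ and distinct $\alpha_1, \ldots, \alpha_d \in \bar K$ defining an elliptic curve $E$, the $S_d$-equivariant assignment $\tilde\alpha_i \mapsto \alpha_i$ determines a ring map $R_0 \to \bar K$ (the image of the inverse discriminant being well-defined by distinctness of the $\alpha_i$). By integrality, this extends to a ring homomorphism $\phi : \tilde R \to \bar K$; after possibly replacing the earlier-chosen $A_i, B_i \in \bar K$ by their negatives, we may arrange $\phi(\tilde A_i) = A_i$ and $\phi(\tilde B_i) = B_i$. Under $\phi$, the curve $\tilde E$ specializes to $E$, and $\tilde E[8]$ maps isomorphically onto $E[8]$ since the discriminant is a unit in $R_0$.

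The generic version of part (a) consists of finitely many polynomial formulas with coefficients in $R_0$ that express each $\tilde E[4]$- and $\tilde E[8]$-coordinate in terms of $\tilde\alpha_i, \zeta_8, \tilde A_i, \tilde B_i$, and conversely; applying $\phi$ converts these into analogous formulas over $\bar K$, yielding the field equalities in (a) for $E/K$. For part (b), any $\sigma \in G_K$ with $\bar\rho_8(\sigma) = -1$ fixes $K(E[2], \zeta_8)$ --- $2$-torsion points being fixed by negation in the group law, and $\det(-1) = 1 \in (\zz/8\zz)^{\times}$ forcing $\sigma$ to fix $\zeta_8$ via the Weil pairing --- and acts on the coordinates of $E[2^n]$-points by negating $y$-coordinates (or, in the degree-$4$ case, the Mumford $B$-polynomials) while fixing $x$-coordinates. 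The generic version of (b) furnishes a polynomial identity in $\tilde R$ stating that the expressions for $\tilde A_i, \tilde B_i$ in torsion coordinates flip sign under this negation; applying $\phi$ descends this to the corresponding identity over $\bar K$, establishing $\sigma(A_i) = -A_i$ and $\sigma(B_i) = -B_i$.

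\textbf{Main obstacle.} The only real technical point is that the conclusions of Theorem \ref{thm 8-torsion} must be interpreted as identities among elements of the integral closure $\tilde R$, so that they survive specialization by $\phi$. This is secured by the construction of $R_0$ (inverting $2$ and the discriminant makes $\tilde E$ smooth over $\Spec R_0$) combined with integral closure; after that, specialization commutes with every operation in sight.
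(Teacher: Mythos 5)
Your overall strategy---specialize from the generic curve via an integral model over $\zz[\frac12,\{\tilde\alpha_i\},\prod(\tilde\alpha_i-\tilde\alpha_j)^{-1}]^{S_d}$---is essentially the mechanism the paper uses in its Steps 2 and 4, but your write-up has a genuine gap at the point where you assert that ``the generic version of part (a) consists of finitely many polynomial formulas with coefficients in $R_0$.'' The hypothesis you are allowed to use is Theorem \ref{thm 8-torsion} over $\tilde K=\cc(\{\tilde\alpha_i\})^{S_d}$, which only provides expressions of the torsion coordinates in terms of the $\tilde A_i,\tilde B_i$ (and conversely) with coefficients in $\cc(\{\tilde\alpha_i\})^{S_d}$; a priori those coefficients can involve transcendental complex numbers, which are not integral over $R_0$ and hence do not lie in your ring $\tilde R$, so nothing specializes. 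Descending the statement from $\cc$ to $\qq$ (equivalently, to $\mathrm{Frac}(R_0)$) is a real step: one must rule out that $\Gal(K_0(E[2^n])/K_0)$ for $K_0=\qq(\zeta_8,\{\tilde\alpha_i\})^{S_d}$ is strictly larger than $\Gal(\tilde K(E[2^n])/\tilde K)$, which the paper does (its Step 1) by showing the mod-$4$ and mod-$8$ images over $\tilde K$ are already all of $\SL(E[2^n])$, so the restriction map $\Gal(\tilde K(E[2^n])/\tilde K)\to\Gal(K_0(E[2^n])/K_0)$ (injective because $\tilde K(E[2^n])=K_0(E[2^n])\cdot\cc$) is forced to be an isomorphism. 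Without this, your claim that the field equalities are witnessed by identities over $R_0$ is unjustified.

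A second, smaller soft spot: even granting coefficients in $\mathrm{Frac}(R_0)$, an equality of \emph{fields} does not by itself survive a specialization $\phi:\tilde R\to\bar K$; you need an equality of \emph{rings} between the integral closure $\mathcal{O}_{S,2^n}$ of $R_0$ in the division field and the ring obtained by adjoining $\zeta_8$ and the $A_i,B_i$ to $\mathcal{O}_{S,2}$, so that denominators cannot vanish under $\phi$. You flag this (``identities among elements of the integral closure'') but give no argument. The paper supplies one: $\mathcal{E}[2^n]\to S$ is finite \'etale, $\mathcal{O}_{S,2^n}$ is integrally closed, and adjoining square roots of squarefree elements to an integrally closed ring in which $2$ is invertible again yields an integrally closed ring, whence the two rings coincide. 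Supplying the $\cc$-to-$\qq$ descent and this ring-level identification would make your specialization argument complete, and would in fact recover the paper's proof in a slightly reorganized form.
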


\begin{proof}

Assume that the statements of Theorem \ref{thm 8-torsion} are true in the degree-$d$ case when $K$ is $L := \cc(\{\tilde{\alpha}_{i}\}_{i = 1}^{d})^{S_{d}}$ and each root $\alpha_{i}$ is equal to $\tilde{\alpha}_{i}$.

\textit{Step 1:} We show that the statements are true for $K = k(\{\tilde{\alpha}_{i}\}_{i = 1}^{d})^{S_{d}}$, where $k$ is any subfield of $\cc$.  Due to the Galois equivariance of the Weil pairing, we have $\zeta_{4} \in K(E[4])$ and $\zeta_{8} \in K(E[8])$.  We will therefore assume that $\zeta_{8} \in k$, so that the image of $\Gal(\bar{K} / K)$ under $\rho_{2}$ modulo $4$ (resp. modulo $8$) is contained in the group $\SL(E[4])$ (resp. $\SL(E[8])$) of automorphisms of determinant $1$.  For any $n \geq 1$, write 
$$\phi_{2^{n}} : \Gal(K(E[2^{n}]) / K) \hookrightarrow \SL(E[2^{n}])$$
 for the obvious injection induced by $\rho_{2}$, and define $\phi_{2^{n}, L} : \Gal(L(E[2^{n}]) / L) \hookrightarrow \SL(E[2^{n}])$ analogously.  From the formulas given in Theorem \ref{thm 8-torsion} and elementary computations of the orders of the (finite) groups above for $n \in \{2, 3\}$, we see that $\phi_{4, L}$ and $\phi_{8, L}$ are isomorphisms.  Now let 
$$\theta_{2^{n}} : \Gal(L(E[2^{n}]) / L) \to \Gal(K(E[2^{n}]) / K)$$
 be the composition of the natural inclusion $\Gal(L(E[2^{n}]) / L) \hookrightarrow \Gal(L(E[2^{n}]) / K)$ with the natural restriction map $\Gal(L(E[2^{n}]) / K) \twoheadrightarrow \Gal(K(E[2^{n}]) / K)$.  Note that the automorphism in $\Gal(L(E[8]) / L(E[2]))$ which changes the sign of each generator given in Theorem \ref{thm 8-torsion} is sent by $\theta_{8}$ to the automorphism in $\Gal(K(E[8]) / K)$ which changes the sign of each of these generators.

It is clear that $\phi_{2^{n}, L} = \phi_{2^{n}} \circ \theta_{2^{n}}$.  It will therefore suffice to show that $\theta_{4}$ and $\theta_{8}$ are isomorphisms.  Indeed, they are injections due to the fact that $L(E[2^{n}])$ is the compositum of the subfields $K(E[2^{n}])$ and $\cc$ for each $n \geq 1$, and the fact that they are surjections in the case of $n \in \{2, 3\}$ follows immediately from the surjectivity of $\phi_{4, L}$ and $\phi_{8, L}$.

\textit{Step 2:} We show that the statements are true for $K = \ff_{p}(\{\tilde{\alpha}_{i}\}_{i = 1}^{d})^{S_{d}}$, where $p \neq 2$.  Let $E_{0}$ be the elliptic curve defined in the obvious way over $K_{0} := \qq(\{\tilde{\alpha}_{i}\}_{i = 1}^{d})^{S_{d}}$ in the degree-$d$ case.  By what was shown in Step 1, the statement of Theorem \ref{thm 8-torsion} is true for $E_{0}$.  It is easy to see that $E_{0}$ admits a model $\mathcal{E}$ over 
$$S := \Spec(\zz[\textstyle\frac{1}{2}, \{\tilde{\alpha}_{i}\}_{i = 1}^{d}, \{(\tilde{\alpha}_{i} - \tilde{\alpha}_{j})^{-1}\}_{1 \leq i < j \leq d}]^{S_{d}})$$
 which is an abelian scheme whose fiber over the prime $(p)$ is isomorphic to $E$.  For each $n \geq 1$, Proposition 20.7 of \cite{milne1986abelian} implies that the kernel of the multiplication-by-$2^{n}$ map on $\mathcal{E} \to S$, which we denote by $\mathcal{E}[2^{n}] \to S$, is a finite \'{e}tale group scheme over $S$.  Since the morphism $\mathcal{E}[2^{n}] \to S$ is finite, $\mathcal{E}[2^{n}]$ is an affine scheme; we write $\mathcal{O}_{S, 2^{n}} \supset \mathcal{O}_{S}$ for the minimal extension of scalars under which $\mathcal{E}[2^{n}]$ becomes constant.  Note that the ring $\zz[\frac{1}{2}, \{\tilde{\alpha}_{i}\}_{i = 1}^{d}, \{(\tilde{\alpha}_{i} - \tilde{\alpha}_{j})^{-1}\}_{1 \leq i < j \leq d}]$, along with all subrings of invariants under finite groups of automorphisms, is integrally closed.  It follows from the fact that $\mathcal{O}_{S}$ is integrally closed and from the finite \'{e}taleness of $\mathcal{O}_{S}[2^{n}]$ that $\mathcal{O}_{S, 2^{n}}$ is also integrally closed.  Moreover, the fraction field of $\mathcal{O}_{S, 2^{n}}$ coincides with $K_{0}(E_{0}[2^{n}])$ while the fraction field of $\mathcal{O}_{S, 2^{n}} / (p)$ coincides with $K(E[2^{n}])$ for each $n$.  We further observe that $\mathcal{O}_{S}[\{\tilde{\alpha}_{i}\}_{i = 1}^{3}]$ (resp. the subring of invariants in $\mathcal{O}_{S}[\{\tilde{\alpha}_{i}\}_{i = 1}^{4}]$ under the permutations which fix all partitions of $\{\tilde{\alpha}_{i}\}_{i = 1}^{4}$ into even-cardinality subsets) is integrally closed with fraction field equal to $K_{0}(E[2])$ and therefore coincides with $\mathcal{O}_{S, 2}$ in the degree-$3$ case (resp. in the degree-$4$ case).

We now need to show that $\mathcal{O}_{S, 2}[\zeta_{4}, A_{1}, A_{2}, A_{3}]$ and $\mathcal{O}_{S, 2}[\zeta_{8}, A_{1}, A_{2}, A_{3}, B_{1}, B_{2}, B_{3}]$ (where $A_{i}, B_{i}$ are the generators given in the statement of Theorem \ref{thm 8-torsion}) are integerally closed and therefore coincide with $\mathcal{O}_{S, 4}$ and $\mathcal{O}_{S, 8}$ respectively.  This claim follows from the elementary observation that extensions of integrally closed rings given by adjoining square roots of squarefree integral elements are integrally closed, provided that $2$ is invertible.  Moreover, the automorphism of $\mathcal{O}_{S, 8}$ which fixes $\mathcal{O}_{S, 2}$ and changes the signs of the $A_{i}$'s and $B_{i}$'s acts on the sections of the finite group scheme $\mathcal{E}[8] \to S$ as multiplication by $-1$, so we are done.

\textit{Step 3:} We show that the statement is true for $K = k(\{\tilde{\alpha}_{i}\}_{i = 1}^{d})^{S_{d}}$, where $k$ is any field of characteristic different from $2$.  Indeed, in the degree-$d$ case, the curve given by $y^{2} = \prod_{i = 1}^{d} (x - \tilde{\alpha}_{i})$ is clearly defined over $\mathfrak{F} := \mathfrak{f}(\{\tilde{\alpha}_{i}\}_{i = 1}^{d})^{S_{d}}$, where $\mathfrak{f}$ is the prime subfield of $k$, so $E$ is defined over $\mathfrak{F}$.  The statements are true over $\mathfrak{F}$ by what was shown in Steps 1 and 2, and the validity of the statements over $K$ follows from observing that $K(E[2^{n}]) = K \mathfrak{F}(E[2^{n}])$ for all $n \geq 1$.

\textit{Step 4:} We show that the statements are true when $K$ is any field of characteristic different from $2$.  Let $E_{0}$ be the elliptic curve defined in the obvious way over $K(\{\tilde{\alpha}_{i}\}_{i = 1}^{d})^{S_{d}}$.  Then $E_{0}$ admits a model over $\Spec(K[\{\tilde{\alpha}_{i}\}_{i = 1}^{d}, (\tilde{\alpha}_{i} - \tilde{\alpha}_{j})_{1 \leq i < j \leq d}]^{S_{d}})$, of which $E$ is the fiber over the closed point given by $\{\tilde{\alpha}_{i} = \alpha_{i}\}_{i = 1}^{d}$, and the claims follow by a similar argument to what was used for Step 2.

\end{proof}

In light of the above lemma, we assume from now on that the $\alpha_{i}$'s are independent transcendental variables over $\cc$ and that $K = \cc(\{\alpha_{i}\}_{i = 1}^{d})^{S_{d}}$ with $d = 3$ (resp. $d = 4$) in the degree-$3$ case (resp. the degree-$4$ case).  Note that again due to the equivariance of the Weil pairing, the image of $\rho_{2}$ is contained in the subgroup $\SL(T_{2}(E)) \subset \Aut(T_{2}(E))$ of automorphisms of determinant $1$.

We now recall the abstract definitions of the braid group and the pure braid group on $d$ strands for any integer $d \geq 1$.  The \textit{braid group on $d$ strands}, denoted $B_{d}$, is generated by elements $\beta_{1}, \beta_{2}, ... , \beta_{d - 1}$, with relations given by 
\begin{align}\label{braid relations}
\beta_{i}\beta_{j} &= \beta_{j}\beta_{i},& |i - j| \geq 2, \notag \\
\beta_{i}\beta_{i + 1}\beta_{i} &= \beta_{i + 1}\beta_{i}\beta_{i + 1}, & 1 \leq i \leq d - 2.
\end{align}
There is a surjective homomorphism of $B_{d}$ onto the symmetric group $S_{d}$ given by mapping each generator $\beta_{i}$ to the transposition $(i, i + 1) \in S_{d}$.  We define the \textit{pure braid group on $d$ strands}, denoted $P_{d} \lhd B_{d}$, to be the kernel of this surjection.  A set of generators $\{A_{i, j}\}_{1 \leq i < j \leq d}$ of $P_{d}$ with relations is given by \cite[Lemma 1.8.2]{birman1974braids}.  We write $\widehat{B}_{d}$ and $\widehat{P}_{d}$ for the profinite completions of $B_{d}$ and $P_{d}$ respectively.  Note that since $P_{d}$ and $B_{d}$ are residually finite, they inject into their profinite completions; moreover, since $P_{d} \lhd B_{d}$ has finite index, this injection induces an inclusion of profinite completions $\widehat{P}_{d} \lhd \widehat{B}_{d}$.

Let $G_{K}^{\unr}$ denote the Galois group of the maximal algebraic extension $K^{\unr}$ of $K$ which is unramified over the discriminant locus $\Delta$ of the affine scheme $\Spec (\cc[\{\alpha_{i}\}_{i = 1}^{d}]^{S_{d}})$, where $d = 3$ (resp. $d = 4$) in the degree-$3$ case (resp. the degree-$4$ case).  For any integer $n \geq 0$, we write $\Gamma(2^{n})$ for the level-$2^{n}$ congruence subgroup of $\SL(T_{2}(E))$ given by $\{\sigma \in \SL(T_{2}(E)) \ | \ \sigma \equiv 1 \ (\mathrm{mod} \ 2^{n})\}$.

\begin{lemma} \label{lemma key}

a) In the degree-$d$ case, we have an isomorphism $G_{K}^{\unr} \cong \widehat{B}_{d}$.

b) The map $\rho_{2} : G_{K} \to \SL(T_{2}(E))$ is surjective and factors through the obvious restriction map $G_{K} \twoheadrightarrow G_{K}^{\unr} \cong \widehat{B}_{d}$, inducing a surjection $\rho_{2}^{\unr} : \widehat{B}_{d} \twoheadrightarrow \SL(T_{2}(E))$.

c) For each $n \geq 0$, the algebraic extension $K(E[2^{n}]) / K$ is a subextension of $K^{\unr} / K$ and corresponds to the normal subgroup $(\rho_{2}^{\unr})^{-1}(\Gamma(2^{n})) \lhd \widehat{B}_{d}$.

d) The normal subgroup $(\rho_{2}^{\unr})^{-1}(\Gamma(2)) \lhd \widehat{B}_{d}$ coincides with $\widehat{P}_{3} \lhd \widehat{B}_{3}$ in the degree-$3$ case, and it coincides with a subgroup $H \lhd \widehat{B}_{4}$ which strictly contains $\widehat{P}_{4} \lhd \widehat{B}_{4}$ and which is isomorphic to $\widehat{P}_{3}$ in the degree-$4$ case.

\end{lemma}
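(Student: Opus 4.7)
For part (a), I would identify $G_{K}^{\unr}$ with the étale fundamental group of $U := V \setminus \Delta$, where $V = \Spec(\cc[\{\alpha_{i}\}_{i = 1}^{d}]^{S_{d}}) \cong \aff^{d}_{\cc}$ (via elementary symmetric polynomials). Because we are in characteristic zero, the comparison theorem between étale and topological fundamental groups identifies $\pi_{1}^{\et}(U)$ with the profinite completion of $\pi_{1}^{\tp}(U(\cc))$; and $U(\cc)$ is precisely the unordered configuration space of $d$ points in $\cc$, whose fundamental group is the braid group $B_{d}$ by Artin's classical theorem. This gives $G_{K}^{\unr} \cong \widehat{B}_{d}$.

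For part (b), the factorization of $\rho_{2}$ through $G_{K}^{\unr}$ is essentially already established in Step 2 of Lemma \ref{lemma descent}: the curve $E$ extends to an abelian scheme $\mathcal{E} \to U$, and each $\mathcal{E}[2^{n}]$ is finite étale over $U$, so $K(E[2^{n}]) \subset K^{\unr}$. Surjectivity of the induced $\rho_{2}^{\unr} : \widehat{B}_{d} \twoheadrightarrow \SL(T_{2}(E))$ is a classical monodromy computation for the Legendre (degree-$3$) and genus-$1$ hyperelliptic (degree-$4$) families: for each $n$, one exhibits enough Dehn twists around collisions of roots, computes their action on the $2$-homology of a reference elliptic curve, and checks that their images in $\SL_{2}(\zz / 2^{n}\zz)$ generate; surjectivity onto $\SL(T_{2}(E))$ then follows by taking the $2$-adic inverse limit. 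Part (c) is immediate from (b): $K(E[2^{n}])$ is the fixed field of $\ker(\bar{\rho}_{2^{n}})$, which by the Galois correspondence over $K^{\unr}/K$ coincides with $(\rho_{2}^{\unr})^{-1}(\Gamma(2^{n}))$.

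For part (d), I would treat the two cases separately. In the degree-$3$ case, the identity $K(E[2]) = K(\alpha_{1}, \alpha_{2}, \alpha_{3})$ means that $\bar{\rho}_{2}$ factors as the standard surjection $\widehat{B}_{3} \twoheadrightarrow S_{3}$ followed by a faithful representation $S_{3} \hookrightarrow \SL_{2}(\ff_{2})$, so its kernel is exactly $\widehat{P}_{3}$. In the degree-$4$ case, the parametrization of $E[2]$ by partitions of $\{\alpha_{i}\}_{i = 1}^{4}$ into pairs (recalled in the introduction) shows that the normal subgroup $V_{4} \lhd S_{4}$ of double transpositions acts trivially on $E[2]$, so $\bar{\rho}_{2}$ factors as $\widehat{B}_{4} \twoheadrightarrow S_{4} \twoheadrightarrow S_{4} / V_{4} \cong S_{3} \hookrightarrow \SL_{2}(\ff_{2})$. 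Therefore $H$ is the preimage of $V_{4}$ in $\widehat{B}_{4}$, which strictly contains $\widehat{P}_{4}$ since $V_{4} \neq 1$. For the abstract isomorphism $H \cong \widehat{P}_{3}$, I would exploit the Möbius normalization sending $\alpha_{4} \mapsto \infty$, which geometrically identifies the quotient of the ordered configuration space of four points in $\proj^{1}$ by $V_{4}$ with the ordered configuration space of three points in $\cc$, and then pass to profinite $\pi_{1}$.

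The main obstacle I anticipate is precisely this final identification $H \cong \widehat{P}_{3}$. The numerical content (namely that $H$ has index $6$ in $\widehat{B}_{4}$ and strictly contains $\widehat{P}_{4}$) is easy, but producing the isomorphism cleanly requires carefully tracking how the $V_{4}$-symmetry and the $\mathrm{PGL}_{2}$-normalization interact at the level of configuration spaces, most likely via a Fadell--Neuwirth-type fibration argument together with careful accounting for the discrepancy between $\pi_{1}$ of the affine and of the projective configuration spaces (i.e., the sphere-braid relation) that explains the extra central $\widehat{\zz}$-factor in $\widehat{P}_{3}$ versus $\widehat{F}_{2}$.
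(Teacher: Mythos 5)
Parts (a) and (c) of your outline coincide with the paper's proof: Riemann's Existence Theorem identifies $G_{K}^{\unr}$ with the profinite completion of $\pi_{1}$ of the unordered configuration space, i.e.\ with $\widehat{B}_{d}$, and (c) is just the Galois correspondence. For (b), your factorization argument via the finite \'etale torsion subschemes is essentially the paper's appeal to N\'eron--Ogg--Shafarevich. Where you diverge is surjectivity: the paper gets it for free by citing the existence of elliptic curves with full $2$-adic image together with a specialization statement from \cite{yelton2015images}, while you propose the Picard--Lefschetz monodromy computation. That route is legitimate, but as written it is only a pointer --- the assertion that the transvections coming from the Dehn twists generate $\SL_{2}(\zz/2^{n}\zz)$ \emph{is} the claim, and you would need to carry out the verification (e.g.\ that they generate $\SL_{2}(\zz)$, whose closure in $\SL_{2}(\zz_{2})$ is everything).

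The genuine gap is in (d) for $d = 4$, exactly where you flagged it, and the repair you propose cannot succeed. Your $H$ (the preimage of $V_{4}$ under $\widehat{B}_{4} \to S_{4}$) contains $\widehat{P}_{4}$ as an open subgroup of index $4$, and $\widehat{P}_{4}$ contains a closed subgroup isomorphic to $\widehat{\zz}^{3}$: the pure braid generators $A_{1,2}$, $A_{3,4}$ and the full twist $\Sigma$ commute pairwise and span a direct summand of $P_{4}^{\ab} \cong \zz^{6}$, so their closure is $\widehat{\zz}^{3}$. But $\widehat{P}_{3} \cong \widehat{F}_{2} \times \widehat{\zz}$ has no such subgroup, since the pro-$p$ Sylow subgroups of $\widehat{F}_{2}$ are free pro-$p$ and therefore contain no $\zz_{p} \times \zz_{p}$. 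Hence no M\"obius/Fadell--Neuwirth manipulation will yield an isomorphism $H \cong \widehat{P}_{3}$; at best one obtains a surjection. The paper's own route is different from yours --- it works with the resolvent-cubic roots $\gamma_{i}$ (note $A_{i}^{2} = \gamma_{i+1} - \gamma_{i+2}$ matches up the two discriminant loci) and asserts that $K(E[2]) = K(\gamma_{1}, \gamma_{2}, \gamma_{3})$ is abstractly a copy of the function field of $Y_{3}$ --- but you should treat that assertion with caution too, since $K(E[2])$ has transcendence degree $4$ over $\cc$ while the function field of $Y_{3}$ has transcendence degree $3$. What the rest of the paper actually uses, and what you \emph{can} prove, is the statement that $\rho_{2}^{\unr}$ restricted to $H$ factors through a surjection $H \twoheadrightarrow \widehat{P}_{3}$: the symplectic action of $B_{4}$ on $H_{1}$ of the genus-$1$ double cover factors through the classical surjection $B_{4} \twoheadrightarrow B_{3}$ (sending $\beta_{1}, \beta_{3} \mapsto \beta_{1}$ and $\beta_{2} \mapsto \beta_{2}$), because the vanishing cycles of $\beta_{1}$ and $\beta_{3}$ are homologous on the double cover; your observation that $\bar{\rho}_{2}$ factors through $S_{4}/V_{4} \cong S_{3}$ is precisely the mod-$2$ shadow of this. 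Reformulating (d) in that form closes your degree-$4$ outline.
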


\begin{proof}

Let $X_{d}$ denote the affine scheme $\Spec(\cc[\{\alpha_{i}\}_{i = 1}^{d}]^{S_{d}}) \setminus \Delta$, where $\Delta$ is the discriminant locus.  It is clear from definitions that $G_{K}^{\unr}$ can be identified with the \'{e}tale fundamental group of $X_{d}$.  Since $X_{d}$ is a complex scheme, it may also be viewed as a complex manifold, and so we may use Riemann's Existence Theorem (\cite{grothendieck224revetements}, Expos\'{e} XII, Corollaire 5.2) to identify its \'{e}tale fundamental group with the profinite completion of the fundamental group of the topological space $X_{d}$.  Now $X_{d}$ is the \textit{configuration space} of (unordered) $d$-element subsets of $\cc$, and it is well known that the fundamental group of $X_{d}$ is isomorphic to the braid group $B_{d}$.  Hence, $G_{K}^{\unr} \cong \widehat{B}_{d}$, and part (a) is proved.  It is also well known that the cover of $X_{d}$ corresponding to the normal subgroup $P_{d} \lhd B_{d}$ is given by the \textit{ordered configuration space} $Y_{d} := \Spec(\cc[\{\alpha_{i}\}_{i = 1}^{d}, \{(\alpha_{i} - \alpha_{j})^{-1}\}_{1 \leq i < j \leq d}])$ with its obvious map onto $X_{d}$.

To prove (b), we first note that $\rho_{2}$ is surjective because it is known that there exist elliptic curves with ``largest possible" $2$-adic Galois images; see also \cite[Corollary 1.2(b)]{yelton2015images}.  Now choose any prime $\mathfrak{p}$ of the coordinate ring of $X_{d}$ and note that $E$ has good reduction with respect to this prime.  It follows from the criterion of N\'{e}ron-Ogg-Shafarevich (\cite[Theorem 1]{serre1968good}) that the action $\rho_{2}$ is unramified with respect to $\mathfrak{p}$ and therefore factors through an algebraic extension of $K(\{\alpha_{i}\}_{i = 1}^{d})$ which is unramified over $\mathfrak{p}$.  The second claim of (b) follows.

Part (c) is immediate from the observation that the action $\bar{\rho}_{2^{n}} : G_{K} \to \Aut(E[2^{n}])$ is clearly the composition of $\rho_{2}$ with the quotient-by-$\Gamma(2^{n})$ map.

Finally, we investigate the subgroup $(\rho_{2}^{\unr})^{-1}(\Gamma(2)) \lhd \widehat{B}_{d}$.  In the degree-$3$ case, we get $(\rho_{2}^{\unr})^{-1}(\Gamma(2)) = \widehat{P}_{3} \lhd \widehat{B}_{3}$ from the fact that $K(E[2]) = K(\alpha_{1}, \alpha_{2}, \alpha_{3})$, which is the function field of the ordered configuration space $Y_{3}$ as defined above.  In the degree-$4$ case, we have seen that $K(E[2]) = K(\gamma_{1}, \gamma_{2}, \gamma_{3}) \subsetneq K(\alpha_{1}, \alpha_{2}, \alpha_{3}, \alpha_{4})$, which is the function field of $Y_{4}$.  Therefore, we have $H := (\rho_{2}^{\unr})^{-1}(\Gamma(2)) \supsetneq \widehat{P}_{4} \lhd \widehat{B}_{4}$.  It is easy to check that the $\gamma_{i}$'s are independent and transcendental over $\cc$, so that $K(E[2])$ and the function field of $Y_{3}$ are isomorphic as abstract $\cc$-algebras.  Thus, $H \cong \widehat{P}_{3}$, and (d) is proved.

\end{proof}

We now present several well-known group-theoretic facts which will be needed later.

\begin{lemma} \label{lemma exponent 2 abelian}

a) The centers of $B_{d}$ and $P_{d}$ are both generated by $\Sigma := (\beta_{1} \beta_{2} ... \beta_{d - 1})^{d} \in P_{d} \lhd B_{d}$; this element can be written as an ordered product of $1$st powers of all the generators $A_{i, j}$ in the presentation for $P_{d}$ given in \cite[Lemma 1.8.2]{birman1974braids}.

b) The abelianization of $P_{d}$ is isomorphic to $\zz^{d(d - 1) / 2}$.  More explicitly, it is freely generated by the images of the above generators $A_{i, j}$.

c) For each $n \geq 1$, the quotient $\Gamma(2^{n}) / \Gamma(2^{n + 1})$ is an elementary abelian group isomorphic to $(\zz / 2\zz)^{3}$.

\end{lemma}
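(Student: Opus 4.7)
The plan is to verify each part by invoking a standard reference or performing a short direct computation, since the lemma assembles only well-known facts.

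For part (a), I would invoke the classical theorem of Chow asserting that for $d \geq 3$ the center of $B_{d}$ is infinite cyclic and generated by $\Sigma = (\beta_{1} \beta_{2} \cdots \beta_{d - 1})^{d}$. The center of $P_{d}$ is then $Z(B_{d}) \cap P_{d}$, and since the image of $\Sigma$ in $S_{d}$ is the $d$th power of the $d$-cycle $(1\, 2\, \cdots\, d)$, we have $\Sigma \in P_{d}$; hence $\Sigma$ also generates $Z(P_{d})$. The explicit expression of $\Sigma$ as an ordered product of first powers of the $A_{i, j}$'s can be read off directly from the presentation in \cite[Lemma 1.8.2]{birman1974braids}.

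For part (b), my approach is to observe that every defining relation in Birman's presentation of $P_{d}$ has the form $A_{r, s} A_{i, j} A_{r, s}^{-1} = W$, where the right-hand word has the same exponent sum as $A_{i, j}$ in each generator. Such relations become trivial upon abelianization, so $P_{d}^{\ab}$ is the free abelian group on $\{A_{i, j}\}_{1 \leq i < j \leq d}$, which has rank $d(d - 1)/2$.

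For part (c), I would compute directly: any element of $\Gamma(2^{n})$ has the form $I + 2^{n} N$ with $N \in M_{2}(\zz_{2})$, and the determinant condition expands to $1 + 2^{n} \mathrm{tr}(N) + 2^{2n} \det(N) = 1$, which modulo $2^{n + 1}$ forces $\mathrm{tr}(N) \equiv 0 \pmod{2}$. Thus $\Gamma(2^{n}) / \Gamma(2^{n + 1})$ is parametrized by trace-zero $2 \times 2$ matrices over $\ff_{2}$, a three-dimensional $\ff_{2}$-vector space. The group law is abelian of exponent $2$ because the cross term $2^{2n} N_{1} N_{2}$ in the product $(I + 2^{n} N_{1})(I + 2^{n} N_{2})$ vanishes modulo $2^{n + 1}$ whenever $n \geq 1$. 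The only mild obstacle in the whole lemma is correctly invoking Chow's theorem for part (a); the remaining two parts are essentially bookkeeping.
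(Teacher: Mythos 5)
Your treatments of parts (b) and (c) are correct and are essentially what the paper intends: the paper's proof merely notes that (b) follows from inspecting Birman's presentation (your observation that every relation is of the form ``a conjugate of $A_{i,j}$ equals a word with the same exponent sums,'' hence dies in the abelianization, is the whole argument) and that (c) is a direct computation (your $I + 2^{n}N$ calculation is the standard one; you should in principle also note surjectivity onto the trace-zero matrices mod $2$, i.e.\ that every such residue class is realized by an actual determinant-one lift, but this is routine).

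The one genuine soft spot is in part (a). For a subgroup $H \leq G$ one only has the inclusion $Z(G) \cap H \subseteq Z(H)$ in general, so your assertion that $Z(P_{d}) = Z(B_{d}) \cap P_{d}$ is not automatic: the reverse inclusion --- that $P_{d}$ acquires no central elements beyond those of $B_{d}$ --- is precisely the nontrivial half of the claim for $P_{d}$, and as written your argument only establishes $\langle \Sigma \rangle \subseteq Z(P_{d})$, not that $\Sigma$ generates the center of $P_{d}$. The paper sidesteps this by citing Birman's Corollary~1.8.4, which computes the centers of $B_{d}$ and $P_{d}$ simultaneously (and whose proof also contains the expression of $\Sigma$ as an ordered product of first powers of the $A_{i,j}$, the second assertion of (a), which you likewise wave at rather than exhibit). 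To repair your version, either cite the statement for $Z(P_{d})$ directly (it is part of the same classical result) or supply the standard inductive argument via the splittings $P_{d} \cong F_{d-1} \rtimes P_{d-1}$. With that reference or argument inserted, your proof coincides with the paper's.
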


\begin{proof}

The statement of (a) can be found in \cite[Corollary 1.8.4]{birman1974braids} and its proof.  Part (b) can be deduced directly from the presentation of $P_{d}$ mentioned above.  Part (c) can be seen easily from direct computations and is a special case of what is shown in the proof of \cite[Corollary 2.2]{sato2010abelianization}.

\end{proof}

It is now easy to determine the $4$-division field of $E$ in both cases and to describe how $G_{K}$ acts on it.  We note that in the degree-$3$ case, parts (a) and (b) are well known and can be deduced by straightforward calculations of order-$4$ points (for instance, in \cite[Example 2.2]{bekker2017divisibility}; see also \cite[Proposition 3.1]{yelton2015images}).

\begin{prop} \label{prop 4-torsion}

a) We have $K(E[4]) = K(E[2], A_{1}, A_{2}, A_{3})$.

b) Any Galois element $\sigma \in G_{K}$ with $\rho_{2}(\sigma) = -1 \in \SL(T_{2}(E))$ acts on $K(E[4])$ by fixing $K(E[2])$ and changing the signs of each generator $A_{i} \in \bar{K}$.

c) In the degree-$3$ case, the scalar automorphism $-1 \in \SL(T_{2}(E))$ is the image of the braid $\Sigma \in \widehat{P}_{3}$ under $\rho_{2}^{\unr}$.  In the degree-$4$ case, the scalar $-1 \in \SL(T_{2}(E))$ is the image of the braid $\Sigma \in \widehat{P}_{3} \cong H$, where $H \lhd \widehat{B}_{4}$ is the subgroup from the statement of Lemma \ref{lemma key}(c).

\end{prop}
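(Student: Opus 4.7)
My plan is to prove (a) and (b) by direct computation of the $4$-torsion points in the degree-$3$ case (with the degree-$4$ case handled analogously on an appropriate model of the Jacobian), and to prove (c) by combining centrality of $\Sigma$ with its topological interpretation as a full twist. For (a) in the degree-$3$ case, for each $2$-torsion point $T_i = (\alpha_i, 0)$ I would solve $x(2P) = \alpha_i$ using the Weierstrass duplication formula $x(2P) = f'(x_0)^2 / (4 f(x_0)) + \sum_j \alpha_j - 2 x_0$. Setting $t := x_0 - \alpha_i$ and exploiting the identities $A_{i+1}^2 = \alpha_{i+2} - \alpha_i$ and $A_{i+2}^2 = \alpha_i - \alpha_{i+1}$, the resulting quartic in $t$ collapses to $(t^2 + A_{i+1}^2 A_{i+2}^2)^2 = 0$, yielding $t = \pm \zeta_4 A_{i+1} A_{i+2}$. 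The corresponding $y$-coordinate then factors as $\pm \zeta_4 A_{i+1} A_{i+2} (A_{i+2} + \zeta_4 A_{i+1})$, using $(A_{i+2} + \zeta_4 A_{i+1})^2 = A_{i+2}^2 - A_{i+1}^2 + 2 \zeta_4 A_{i+1} A_{i+2}$. Since $\zeta_4 \in \cc \subseteq K$ in the reduced setting from Lemma \ref{lemma descent}, all these coordinates lie in $K(E[2], A_1, A_2, A_3)$. Conversely, the same formulas show $\zeta_4 A_{i+1} A_{i+2}$ and $A_{i+2} + \zeta_4 A_{i+1}$ both lie in $K(E[4])$, and the quadratic formula (using $\zeta_4 \in K$) recovers $A_{i+1}$ and $A_{i+2}$ individually; varying $i$ accounts for all three $A_j$.

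Part (b) then follows directly from these formulas. Since $\rho_2(\sigma) = -I \equiv I \pmod 2$, $\sigma$ fixes $K(E[2])$ pointwise, and it sends every $4$-torsion point $P$ to $-P$, hence fixes all $x$-coordinates and negates all $y$-coordinates. Applying $\sigma$ to $x_0 - \alpha_i = \zeta_4 A_{i+1} A_{i+2}$ (with $\zeta_4 \in K$ fixed) yields $\sigma(A_{i+1} A_{i+2}) = A_{i+1} A_{i+2}$, and then $\sigma(y_0) = -y_0$ forces $\sigma(A_{i+2} + \zeta_4 A_{i+1}) = -(A_{i+2} + \zeta_4 A_{i+1})$. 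The only consistent solution (generically) is $\sigma(A_{i+1}) = -A_{i+1}$ and $\sigma(A_{i+2}) = -A_{i+2}$; varying $i$ gives the full claim.

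For (c), by Lemma \ref{lemma exponent 2 abelian}(a), $\Sigma$ is central in $\widehat{P}_d$ (and in $\widehat{B}_d$). In the degree-$3$ case, combining this with the surjectivity of $\rho_2^{\unr}$ (Lemma \ref{lemma key}(b)) places $\rho_2^{\unr}(\Sigma)$ inside $Z(\SL(T_2(E))) = \{\pm I\}$. In the degree-$4$ case, $\Sigma \in \widehat{P}_3 \cong H$ is central in $H$, so $\rho_2^{\unr}(\Sigma)$ commutes with $\rho_2^{\unr}(H) = \Gamma(2)$; a short matrix computation shows the centralizer of $\Gamma(2)$ inside $\SL(T_2(E))$ is exactly $\{\pm I\}$, so again $\rho_2^{\unr}(\Sigma) \in \{\pm I\}$. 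To rule out $+I$, I would appeal to the topological interpretation of $\Sigma$ as the full twist of the three relevant variables (the $\alpha_i$'s in the degree-$3$ case, and the $\gamma_i$'s in the degree-$4$ case, under the abstract isomorphism from Lemma \ref{lemma key}(d)): each difference $\alpha_{i+1} - \alpha_{i+2}$ (resp.\ $\gamma_{i+1} - \gamma_{i+2}$) winds once around $0 \in \cc$ during this motion, so each square root $A_i$ is negated. By part (b) this is precisely the action of $-I$ on $K(E[4])$, so $\rho_2^{\unr}(\Sigma) = -I$. I expect the main obstacle to be the degree-$4$ case: the explicit $4$-torsion computation requires a concrete model of the Jacobian, and the identification of $\Sigma$ inside $\widehat{B}_4$ via $\widehat{P}_3 \cong H$ requires unravelling this isomorphism carefully enough to confirm that $\Sigma$ corresponds to the full twist of the $\gamma_i$'s (and not, for instance, to the full twist $\Sigma_4 \in \widehat{B}_4$).
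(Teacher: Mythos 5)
Your route is genuinely different from the paper's, and the difference matters most in the degree-$4$ case. The paper proves (a) uniformly in both cases by group theory alone: the composite $\widehat{P}_{3} \cong (\rho_{2}^{\unr})^{-1}(\Gamma(2)) \twoheadrightarrow \Gamma(2)/\Gamma(4)$ factors through $P_{3}^{\ab}/2P_{3}^{\ab}$, both groups are $(\zz/2\zz)^{3}$, so $K(E[4])$ is the \emph{unique} degree-$8$ exponent-$2$ abelian subextension of $K^{\unr}/K(E[2])$, and one only has to check that $K(E[2],A_{1},A_{2},A_{3})$ is one such. Your duplication-formula computation is the classical alternative that the paper itself acknowledges for the degree-$3$ case (citing Bekker--Zarhin and \cite[Proposition 3.1]{yelton2015images}), and your degree-$3$ formulas check out: the quartic in $t$ does collapse to $(t^{2}+A_{i+1}^{2}A_{i+2}^{2})^{2}$, and the recovery of the individual $A_{j}$'s is legitimate because $(A_{i+2}+\zeta_{4}A_{i+1})(A_{i+2}-\zeta_{4}A_{i+1}) = A_{i+1}^{2}+A_{i+2}^{2} \in K(E[2])$, so both factors lie in $K(E[4])$. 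The genuine gap is the degree-$4$ case of (a) and (b), which you defer to ``an appropriate model of the Jacobian'': to run your computation you must exhibit the Jacobian explicitly as $y^{2}=\prod(x-\gamma_{i}')$ with the $\gamma_{i}'$ translates of the $\gamma_{i}$'s \emph{and} rule out an interfering quadratic twist (a twist by $c$ leaves $K(E[2])$ unchanged but in general alters $K(E[4])$). That verification is the actual content in this case, and the paper's uniform argument is designed precisely to avoid it.

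For (c), the first half of your argument (centrality of $\Sigma$ plus surjectivity of $\rho_{2}^{\unr}$, resp.\ of its restriction to $H$ onto $\Gamma(2)$, forces $\rho_{2}^{\unr}(\Sigma)\in\{\pm 1\}$) coincides with the paper's. To rule out $+1$, the paper argues algebraically: by Lemma \ref{lemma exponent 2 abelian}(a), $\Sigma$ is a product of first powers of all the generators $A_{i,j}$, so its image in $P_{3}^{\ab}/2P_{3}^{\ab}\cong\Gamma(2)/\Gamma(4)$ is nontrivial, hence $\rho_{2}^{\unr}(\Sigma)\neq 1$. Your monodromy argument (the full twist winds each $\alpha_{i+1}-\alpha_{i+2}$ once around $0$, negating each $A_{i}$, which by (b) is the action of $-1$) works in the degree-$3$ case, but in the degree-$4$ case it requires realizing the abstract isomorphism $H\cong\widehat{P}_{3}$ geometrically and tracking $\Sigma$ through it---exactly the obstacle you flag, and one you have not overcome. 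Note that this step is avoidable: the abelianization criterion applies to the image of $\Sigma$ under \emph{any} isomorphism $\widehat{P}_{3}\cong H$, since automorphisms of $\widehat{P}_{3}$ carry $\Sigma$ to $\Sigma^{u}$ with $u\in\widehat{\zz}^{\times}$ and hence preserve its class modulo $2P_{3}^{\ab}$. As written, your proposal is sound for the degree-$3$ case but leaves all three parts genuinely open in the degree-$4$ case.
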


\begin{proof}

Consider the composition of the restriction $\rho_{2}^{\unr} : (\rho_{2}^{\unr})^{-1}(\Gamma(2)) \twoheadrightarrow \Gamma(2)$ with the quotent map $\Gamma(2) \twoheadrightarrow \Gamma(2) / \Gamma(4)$.  Since $\Gamma(2) / \Gamma(4)$ is an abelian group of exponent $2$ by Lemma \ref{lemma exponent 2 abelian}(c), this composition must factor through the maximal exponent-$2$ abelian quotient $P_{3}^{\ab} / 2P_{3}^{\ab}$ of $(\rho_{2}^{\unr})^{-1}(\Gamma(2)) \cong \widehat{P}_{3}$.  We denote this induced surjection by $R : P_{3}^{\ab} / 2P_{3}^{\ab} \twoheadrightarrow \Gamma(2) / \Gamma(4)$.  It follows from parts (b) and (c) of Lemma \ref{lemma exponent 2 abelian} that both $P_{3}^{\ab} / 2P_{3}^{\ab}$ and $\Gamma(2) / \Gamma(4)$ are isomorphic to $(\zz / 2\zz)^{3}$, and so $R$ is an isomorphism.  Thus, $K(E[4])$ is the unique subextension of $K^{\unr} / K(E[2])$ with $\Gal(K(E[4]) / K(E[2])) \cong (\zz / 2\zz)^{3}$.  It is easy to check that in both cases, $K(E[2], A_{1}, A_{2}, A_{3})$ is such a subextension, and so $K(E[4]) = K(E[2], A_{1}, A_{2}, A_{3})$, proving (a).

Part (b) follows from checking that the automorphism of $K(E[4])$ defined by changing the signs of all the $A_{i}$'s is the only nontrivial automorphism lying in the center of $\Gal(K(E[2], A_{1}, A_{2}, A_{3}) / K)$.

Now it follows from (a) and (b) of Lemma \ref{lemma exponent 2 abelian} that $\Sigma$ has nontrivial image in $P_{3}^{\ab} / 2P_{3}^{\ab}$.  It therefore has nontrivial image in $\Gamma(2) / \Gamma(4)$, so $\rho_{2}^{\unr}(\Sigma)$ is a nontrivial element of $\SL(T_{2}(E))$.  We know from Lemma \ref{lemma exponent 2 abelian}(a) that $\Sigma$ lies in the center of $\widehat{P}_{3}$.  It follows from Lemma \ref{lemma key}(d) that $\rho_{2}^{\unr}$ restricted to $(\rho_{2}^{\unr})^{-1}(\Gamma(2)) \cong \widehat{P}_{3}$ is surjective onto $\Gamma(2) \lhd \SL(T_{2}(E))$, so it takes the center of $\widehat{P}_{3}$ to the center of $\Gamma(2)$, which is $\{\pm 1\}$.  We therefore get $\rho_{2}(\sigma) = -1 \in \Gamma(2)$, which is the statement of (c).

\end{proof}

We now want to find generators for the extension $K(E[8]) / K(E[2])$.  In order to do so, we will first prove that $\widehat{P}_{3}$ has a unique quotient isomorphic to $\Gamma(2) / \Gamma(8)$ (Lemma \ref{lemma unique quotient} below), and then we will show that the extension of $K(E[2])$ given in the statement of Theorem \ref{thm 8-torsion} has Galois group isomorphic to $\Gamma(2) / \Gamma(8)$ (Lemma \ref{lemma Galois group} below).  For the following, we note that after fixing a basis of the free rank-$2$ $\zz_{2}$-module $T_{2}(E)$, we may consider $\SL(T_{2}(E))$ as the matrix group $\SL_{2}(\zz_{2})$.  Moreover, by applying a suitable form of the Strong Approximation Theorem (see for instance Theorem 7.12 of \cite{platonov1993algebraic}), we have $\Gamma(2) / \Gamma(8) \cong (\Gamma(2) \cap \SL_{2}(\zz)) / (\Gamma(8) \cap \SL_{2}(\zz))$.  In light of this, in the proofs of the next two lemmas, we use the symbols $\Gamma(2)$ and $\Gamma(8)$ to denote principal congruence subgroups of $\SL_{2}(\zz)$ rather than of $\SL_{2}(\zz_{2}) \cong \SL(T_{2}(E))$.

\begin{lemma} \label{lemma group structure}

The group $\Gamma(2)$ decomposes into a direct product of the scalar subgroup $\{\pm 1\}$ with another subgroup $\Gamma(2)'$.  The quotient $\Gamma(2)' / \Gamma(8)$ can be presented as 
\begin{equation}\label{presentation of Gamma(2) / Gamma(8)}
\langle \sigma, \tau \ | \ \sigma^{4} = \tau^{4} = [\sigma^{2}, \tau] = [\sigma, \tau^{2}] = [\sigma, \tau]^{2} = [[\sigma, \tau], \sigma] = [[\sigma, \tau], \tau] = 1 \rangle.
\end{equation}

\end{lemma}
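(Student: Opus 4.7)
My plan is to fix generators $\sigma := T = \begin{pmatrix}1&2\\0&1\end{pmatrix}$ and $\tau := S = \begin{pmatrix}1&0\\2&1\end{pmatrix}$ of $\Gamma(2)$. For the splitting, I would appeal to the classical fact that the image of $\Gamma(2)$ in $\PSL_{2}(\zz)$ is a free group of rank $2$ freely generated by the classes of $T$ and $S$, which follows from a fundamental-domain argument for the action on the upper half-plane. Since any central extension of a free group splits, setting $\Gamma(2)' := \langle T, S\rangle$ yields a subgroup that maps isomorphically onto $\Gamma(2)/\{\pm I\}$ and has trivial intersection with $\{\pm I\}$; consequently $\Gamma(2) = \{\pm I\} \times \Gamma(2)'$.

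For the notation $\Gamma(2)'/\Gamma(8)$ to be read literally, I would next verify that $\Gamma(8) \subseteq \Gamma(2)'$. Reducing mod $4$, one computes that $\bar{T}$ and $\bar{S}$ are commuting involutions in $\SL_{2}(\zz/4\zz)$, so the image of $\Gamma(2)'$ in $\SL_{2}(\zz/4\zz)$ is the Klein four-group $\{I, \bar{T}, \bar{S}, \bar{T}\bar{S}\}$, which does not contain $-I \bmod 4$. Decomposing any $g \in \Gamma(8)$ as $\epsilon f$ with $\epsilon \in \{\pm I\}$ and $f \in \Gamma(2)'$, the congruence $g \equiv I \pmod{4}$ forces $f \equiv \epsilon \pmod{4}$ and hence $\epsilon = I$. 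A short order count $|\Gamma(2)/\Gamma(8)| = |\SL_{2}(\zz/8\zz)|/|\SL_{2}(\zz/2\zz)| = 384/6 = 64$ combined with $[\Gamma(2) : \Gamma(2)'] = 2$ then yields $[\Gamma(2)' : \Gamma(8)] = 32$.

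Finally, for the presentation itself, I would verify by direct mod-$8$ matrix calculation that each of the seven listed relations becomes trivial when $\sigma, \tau$ are specialized to $T, S$; this produces a surjection from the abstractly presented group $G$ onto $\Gamma(2)'/\Gamma(8)$. To upgrade this surjection to an isomorphism, it suffices to bound $|G| \leq 32$: the listed relations force $\sigma^{2}$, $\tau^{2}$, and $c := [\sigma, \tau]$ to be central with $c^{2} = 1$, so the derived subgroup $[G, G] = \langle c \rangle$ has order at most $2$, while the abelianization $G^{\ab}$ is presented as $\langle \sigma, \tau \mid \sigma^{4}, \tau^{4}\rangle^{\ab} \cong (\zz/4\zz)^{2}$ of order $16$. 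The main obstacle I anticipate lies in the middle paragraph: both the inclusion $\Gamma(8) \subseteq \Gamma(2)'$ and the count $|\Gamma(2)'/\Gamma(8)| = 32$ rest on the mod-$4$ computation pinning down the image of $\Gamma(2)'$ in $\SL_{2}(\zz/4\zz)$. The relation-checking and the upper bound on $|G|$ are then essentially bookkeeping.
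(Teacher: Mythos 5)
Your proposal is correct and follows essentially the same route as the paper: exhibit the two explicit unipotent generators, check the seven relations mod $8$, and then match orders by bounding the presented group above by $32$ (commutator subgroup of order at most $2$, abelianization $(\zz/4\zz)^{2}$) against $[\Gamma(2)':\Gamma(8)] = 32$. The only cosmetic difference is that the paper realizes the complement $\Gamma(2)'$ concretely as the matrices with diagonal entries $\equiv 1 \pmod 4$ (which makes $\Gamma(8)\subseteq\Gamma(2)'$ immediate), whereas you define it as $\langle T,S\rangle$ and obtain the splitting from the freeness of $\Gamma(2)/\{\pm I\}$; the two subgroups coincide, and your mod-$4$ verification correctly supplies the containment $\Gamma(8)\subseteq\Gamma(2)'$ that this definition requires.
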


\begin{proof}

Let $\Gamma(2)'$ be the subgroup consisting of matrices in $\Gamma(2)$ whose diagonal entries are equivalent to $1$ modulo $4$.  Then it is straightforward to check that $\Gamma(2) = \{\pm 1\} \times \Gamma(2)'$.  We note that by Lemma \ref{lemma exponent 2 abelian}(c), the order of $\Gamma(2) / \Gamma(8)$ is $64$, and so since $-1 \notin \Gamma(8)$, the order of $\Gamma(2)' / \Gamma(8)$ is $32$.

Let $\sigma$ (resp. $\tau$) be the image of $\tilde{\sigma} := \begin{bmatrix} 1 & -2 \\ 0 & 1 \end{bmatrix}$ (resp. $\tilde{\tau} := \begin{bmatrix} 1 & 0 \\ 2 & 1 \end{bmatrix}$) in $\Gamma(2)' / \Gamma(8)$.  It is well known that $\tilde{\sigma}$ and $\tilde{\tau}$ generate $\Gamma(2)' \cap \SL_{2}(\zz)$ (see, for instance, Proposition A.1 of \cite{mumford1983tata}), so $\sigma$ and $\tau$ generate $\Gamma(2)' / \Gamma(8)$.  It is then straightforward to check that the relations given in (\ref{presentation of Gamma(2) / Gamma(8)}) hold.  To show that these relations determine the group $\Gamma(2)' / \Gamma(8)$, one checks that the only nontrivial element of the commutator subgroup of the group given by (\ref{presentation of Gamma(2) / Gamma(8)}) has order $2$ and that the quotient by the commutator subgroup is isomorphic to $\zz / 4 \zz \times \zz / 4\zz$; therefore, the group has order $32$.  Since $\Gamma(2)' / \Gamma(8)$ also has order $32$, it must be fully determined by the relations in (\ref{presentation of Gamma(2) / Gamma(8)}).

\end{proof}

\begin{lemma} \label{lemma unique quotient}

The only normal subgroup of $\widehat{P}_{3}$ which induces a quotient isomorphic to $\Gamma(2) / \Gamma(8)$ is $(\rho_{2}^{\unr})^{-1}(\Gamma(8)) \lhd (\rho_{2}^{\unr})^{-1}(\Gamma(2)) \cong \widehat{P}_{3}$.

\end{lemma}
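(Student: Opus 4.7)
The plan is to show that all surjections from $\widehat{P}_{3}$ onto $\Gamma(2)/\Gamma(8)$ share the same kernel; since two surjections with the same target have the same kernel if and only if they differ by an automorphism of the target, this amounts to showing that $\Aut(\Gamma(2)/\Gamma(8))$ acts transitively on the set of such surjections. Because $\Gamma(2)/\Gamma(8)$ is a $2$-group (of order $64$ by Lemma~\ref{lemma exponent 2 abelian}(c)), any such surjection factors through the maximal pro-$2$ quotient of $\widehat{P}_{3}$.

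First I would exploit the classical decomposition $P_{3} \cong F_{2} \times \langle \Sigma \rangle$, where $\Sigma$ generates the center (Lemma~\ref{lemma exponent 2 abelian}(a)); this gives that the maximal pro-$2$ quotient of $\widehat{P}_{3}$ decomposes as a direct product $\widehat{F_{2}}^{(2)} \times \zz_{2}$. By Proposition~\ref{prop 4-torsion}(c), the standard surjection $\rho_{2}^{\unr}$ composed with reduction modulo $\Gamma(8)$ sends $\Sigma$ to $-1$, which is precisely the nontrivial element of the direct factor $\{\pm 1\}$ identified in Lemma~\ref{lemma group structure}.

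Next, for a general surjection $q : \widehat{P}_{3} \twoheadrightarrow \Gamma(2)/\Gamma(8)$, the image $q(\Sigma)$ must lie in the center of $\Gamma(2)/\Gamma(8)$. Using the splitting $\Gamma(2)/\Gamma(8) = \{\pm 1\} \times \Gamma(2)'/\Gamma(8)$ and the presentation (\ref{presentation of Gamma(2) / Gamma(8)}), I would identify the center explicitly and, composing with an automorphism of the target if necessary, reduce to the case $q(\Sigma) = -1$. After this reduction, $q$ descends to a surjection $\widehat{F_{2}}^{(2)} \twoheadrightarrow \Gamma(2)'/\Gamma(8)$, and the remaining task is to show that such surjections are unique up to $\Aut(\Gamma(2)'/\Gamma(8))$. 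By the universal property of the free pro-$2$ group on $2$ generators, these correspond to ordered pairs of topological generators of $\Gamma(2)'/\Gamma(8)$, and I would verify directly from (\ref{presentation of Gamma(2) / Gamma(8)}) that $\Aut(\Gamma(2)'/\Gamma(8))$ acts transitively on such pairs.

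The main obstacle is the last transitivity claim: one must check that any pair $(x, y)$ of topological generators satisfying the relations in (\ref{presentation of Gamma(2) / Gamma(8)}) can be carried to $(\sigma, \tau)$ by an automorphism. This is a finite computation involving the $32$ elements of $\Gamma(2)'/\Gamma(8)$ and their defining relations, but no additional structure is needed beyond what (\ref{presentation of Gamma(2) / Gamma(8)}) already supplies. Combined with the uniqueness of $q(\Sigma)$ modulo $\Aut(\Gamma(2)/\Gamma(8))$, this yields that the unique normal subgroup $N \lhd \widehat{P}_{3}$ with $\widehat{P}_{3}/N \cong \Gamma(2)/\Gamma(8)$ is $(\rho_{2}^{\unr})^{-1}(\Gamma(8))$.
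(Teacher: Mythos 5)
The key gap is in your reduction to $q(\Sigma) = -1$. Centrality of $q(\Sigma)$ only tells you that it lies in the center of $\Gamma(2)/\Gamma(8) = \{\pm 1\} \times \Gamma(2)'/\Gamma(8)$, which is elementary abelian of order $16$ and contains the Frattini subgroup $\Phi = \{1\} \times \langle \sigma^{2}, \tau^{2}, [\sigma,\tau] \rangle$ of order $8$. The elements of $\Phi$ (including $1$, $\sigma^{2}$, $[\sigma,\tau]$, etc.) are \emph{not} in the $\Aut$-orbit of $-1$: the Frattini subgroup is characteristic and $-1 \notin \Phi$, since $-1$ is neither a square nor a product of commutators in $\{\pm 1\} \times \Gamma(2)'/\Gamma(8)$. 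So the claim that one can always move $q(\Sigma)$ to $-1$ by an automorphism of the target is false as stated, and you give no argument excluding these values. The missing ingredient is a generation count: the Frattini quotient of $\Gamma(2)/\Gamma(8)$ is $(\zz/2\zz)^{3}$, so the group cannot be topologically generated by $2$ elements, whereas the $\widehat{F_{2}}^{(2)}$-factor contributes only $2$ generators; hence $q(\Sigma)$ must be nontrivial modulo $\Phi$, which together with centrality confines it to the coset $(-1)\Phi \cap Z$, all of whose elements really are $\Aut$-equivalent to $-1$ (via $x \mapsto x\chi(x)$ for a suitable central character $\chi$ killing $\Gamma(2)'/\Gamma(8)$). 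This is exactly the ``$\Gamma(2)/\Gamma(8)$ cannot be generated by only $2$ elements'' step in the paper's proof, and your argument does not go through without it. A smaller loose end: after arranging $q(\Sigma) = -1$, the image of the $\widehat{F_{2}}^{(2)}$-factor is only some index-$2$ complement of $\{\pm 1\}$, not necessarily the subgroup $\Gamma(2)'/\Gamma(8)$ itself, so one further (harmless) automorphism is needed before you can speak of a surjection onto $\Gamma(2)'/\Gamma(8)$.

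With that repair, your route is legitimate and genuinely different in architecture from the paper's. The paper never classifies surjections up to $\Aut$ of the target: it shows that the image of $\Sigma$ mod any such normal subgroup has order exactly $2$, factors everything through $P_{3}/\langle \Sigma^{2} \rangle \cong \Gamma(2)$ (using that $\Gamma(2)$ is Hopfian), and then observes that every relator in (\ref{presentation of Gamma(2) / Gamma(8)}) is a consequence of identities valid for \emph{all} pairs of elements of $\Gamma(2)'/\Gamma(8)$ (exponent $4$, squares central, commutators central of order $2$), so that any candidate kernel contains the normal closure of the relators, i.e.\ $\Gamma(8)$. Your final transitivity claim --- that $\Aut(\Gamma(2)'/\Gamma(8))$ acts transitively on generating pairs --- is true for precisely the same reason (every generating pair satisfies the verbal relations and hence extends to an automorphism), so the two approaches converge on the same computation; yours additionally uses the splitting $P_{3} \cong F_{2} \times \langle \Sigma \rangle$ and pro-$2$ completions, which the paper avoids.
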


\begin{proof}

Since $\widehat{P}_{3}$ and $P_{3}$ have the same finite quotients, it suffices to show that the only normal subgroup of $P_{3}$ inducing a quotient isomorphic to $\Gamma(2) / \Gamma(8)$ coincides with $(\rho_{2}^{\unr})^{-1}(\Gamma(8)) \cap P_{3}$.  Let $N \lhd P_{3}$ be a normal subgroup whose corresponding quotient is isomorphic to $\Gamma(2) / \Gamma(8)$.  By Lemma \ref{lemma exponent 2 abelian}(a), the braid $\Sigma$ generates the center of $P_{3}$; therefore, its image modulo $N$ must lie in the center of $P_{3} / N \cong \Gamma(2) / \Gamma(8)$.  It can easily be deduced from Lemma \ref{lemma group structure} that the center of $\Gamma(2) / \Gamma(8)$ is an elementary abelian $2$-group, so the image of $\Sigma$ modulo $N$ must have order dividing $2$.  We claim that $\Sigma \notin N$.  Indeed, if $\Sigma \in N$, then $P_{3} / N$ could be generated by the images of only $2$ of the generators of $P_{3}$ given above.  But it is clear from Lemma \ref{lemma group structure} that $\Gamma(2) / \Gamma(8) = \{\pm 1\} \times \Gamma(2)' / \Gamma(8)$ cannot be generated by only $2$ elements, a contradiction.  Therefore, the image of $\Sigma$ modulo $N$ has order $2$, so $\Sigma^{2} \in N$ and the quotient map factors through $P_{3} / \langle \Sigma^{2} \rangle$.  But the discussion in \cite[\S3.6.4]{farb2011primer} shows that $P_{3} / \langle \Sigma^{2} \rangle \cong \Gamma(2) \lhd \SL_{2}(\zz)$.  We claim that in fact, the kernel of $\rho_{2}^{\unr}$ coincides with $\langle \Sigma^{2} \rangle$, so that the quotient-by-$N$ map factors through $\rho_{2}^{\unr} : P_{3} \twoheadrightarrow \Gamma(2)$.  Since $\rho_{2}^{\unr}(\Sigma) = -1 \in \SL_{2}(\zz_{2})$ by (b) and (c) of Proposition \ref{prop 4-torsion}, we know that the kernel of $\rho_{2}^{\unr}$ contains $\langle \Sigma^{2} \rangle$, and to prove the claim we need to show that $\Gamma(2)$ has no proper quotient isomorphic to itself.  But this follows from the fact that $\Gamma(2)$ is finitely generated and is residually finite, so the claim holds.  Therefore, to prove the statement of the lemma, it suffices to show that $\Gamma(8)$ is the only normal subgroup of $\Gamma(2)$ which induces a quotient isomorphic to $\Gamma(2) / \Gamma(8)$.

Any surjection $\Gamma(2) \twoheadrightarrow \Gamma(2) / \Gamma(8)$ takes $-1 \in \Gamma(2)$ to a nontrivial element $\mu \in \Gamma(2) / \Gamma(8)$ and takes $\Gamma(2)'$ to some proper subgroup of $\Gamma(2) / \Gamma(8)$ not containing $\mu$, since $\Gamma(2)'$ can be generated by only $2$ elements while $\Gamma(2) / \Gamma(8)$ cannot.  Therefore, such a surjection takes $\Gamma(2)'$ to a subgroup of $\Gamma(2) / \Gamma(8)$ isomorphic to $\Gamma(2)' / \Gamma(8)$.  So in fact it suffices to show that $\Gamma(8)$ is the only normal subgroup of $\Gamma(2)'$ which induces a quotient isomorphic to $\Gamma(2)' / \Gamma(8)$.

Let $N' \lhd \Gamma(2)'$ be a normal subgroup such that $\Gamma(2)' / N' \cong \Gamma(2)' / \Gamma(8)$.  Let $\tilde{\sigma}$ and $\tilde{\tau}$ be the matrices given in the proof of Lemma \ref{lemma group structure}, and let $\phi_{N'} : \Gamma(2)' \twoheadrightarrow \Gamma(2)' / N'$ be the obvious quotient map.  One checks from the presentation given in the statement of Lemma \ref{lemma group structure} that each element of $\Gamma(2)' / N'$ has order dividing $4$; that each square element lies in the center; and that each commutator has order dividing $2$ and lies in the center.  It follows that $\phi_{N'}(\tilde{\sigma}^{4}) = \phi_{N'}(\tilde{\tau}^{4}) = \phi_{N'}([\tilde{\sigma}^{2}, \tilde{\tau}]) = \phi_{N'}([\tilde{\sigma}, \tilde{\tau}^{2}]) = \phi_{N'}([\tilde{\sigma}, \tilde{\tau}]^{2}) = \phi_{N'}([[\tilde{\sigma}, \tilde{\tau}], \tilde{\sigma}]) = \phi_{N'}([[\tilde{\sigma}, \tilde{\tau}], \tilde{\tau}]) = 1$.  Thus, $N'$ contains the subgroup normally generated by $\{\tilde{\sigma}^{4}, \tilde{\tau}^{4}, [\tilde{\sigma}^{2}, \tilde{\tau}], [\tilde{\sigma}, \tilde{\tau}^{2}], [\tilde{\sigma}, \tilde{\tau}]^{2}, [[\tilde{\sigma}, \tilde{\tau}], \tilde{\sigma}], [[\tilde{\sigma}, \tilde{\tau}], \tilde{\tau}]\}$.  But Lemma \ref{lemma group structure} implies that $\Gamma(8) \lhd \Gamma(2)'$ is normally generated by this subset, so $\Gamma(8) \unlhd N'$.  Since $\Gamma(2)' / N'$ and $\Gamma(2)' / \Gamma(8)$ have the same (finite) order, we have $N' = \Gamma(8)$, as desired.

\end{proof}

\begin{lemma} \label{lemma Galois group}

The Galois group $\Gal(K(E[2], A_{1}, A_{2}, A_{3}, B_{1}, B_{2}, B_{3}) / K(E[2]))$ is isomorphic to $\Gamma(2) / \Gamma(8)$.

\end{lemma}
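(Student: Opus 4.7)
The strategy is to identify $L := K(E[2], A_{1}, A_{2}, A_{3}, B_{1}, B_{2}, B_{3})$ with $K(E[8])$; once this is done, the claim follows immediately from Lemma \ref{lemma key}(c) combined with the surjectivity of $\rho_{2}^{\unr}$, which together give $\Gal(K(E[8])/K(E[2])) \cong \Gamma(2)/\Gamma(8)$.

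I would first verify $L \subseteq K(E[8])$, i.e.\ that each $B_{i}$ lies in $K(E[8])$, by an explicit halving computation on $E$: using the description in Proposition \ref{prop 4-torsion}(a) of $4$-torsion coordinates in terms of the $\alpha_{i}$'s and $A_{i}$'s, the $x$-coordinate of an $8$-torsion point $P$ whose double is a prescribed $4$-torsion point satisfies a quadratic equation over $K(E[4])$; a direct (if tedious) calculation with the doubling map should show that the discriminant of this quadratic equals $A_{i}(A_{i + 1} + \zeta_{4} A_{i + 2}) = B_{i}^{2}$ modulo squares in $K(E[4])$, so that $B_{i} \in K(E[8])$.

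I would next show $[L : K(E[4])] = 8$, equivalently that $B_{1}^{2}, B_{2}^{2}, B_{3}^{2}$ are $\ff_{2}$-independent in $K(E[4])^{\times}/(K(E[4])^{\times})^{2}$. This reduces to checking that none of the seven nontrivial subset products among them is a square in $K(E[4])$; since the $\alpha_{i}$'s are algebraically independent transcendentals over $\cc$, I can specialize them to a convenient numerical value where $K(E[4])$ becomes a concrete finite extension of $\qq(\zeta_{4})$ and non-squareness is directly detectable. Combined with $[K(E[4]) : K(E[2])] = 8$ from Proposition \ref{prop 4-torsion}(a) and $[K(E[8]) : K(E[4])] = 8$ from Lemma \ref{lemma exponent 2 abelian}(c) applied at $n = 2$, this yields $[L : K(E[2])] = 64 = [K(E[8]) : K(E[2])]$, forcing $L = K(E[8])$. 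The main obstacle is the first step: arranging the halving computation so that the resulting discriminant visibly matches $A_{i}(A_{i + 1} + \zeta_{4} A_{i + 2})$ up to squares in $K(E[4])$; the independence check in the second step, while essential, should become routine once a good numerical specialization of the $\alpha_{i}$'s is identified.
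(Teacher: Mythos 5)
Your proposal runs in the opposite logical direction from the paper, and in doing so it lands on a genuine gap. The paper proves this lemma by pure field theory \emph{before} knowing that $K' := K(E[2], A_{1}, A_{2}, A_{3}, B_{1}, B_{2}, B_{3})$ equals $K(E[8])$: it computes $[K' : K(E[2])] = 64$, writes down explicit automorphisms $\sigma, \tau, \mu$ of $K'/K(E[2])$ (using the auxiliary elements $B_i'$ and the identity $A_1^2+A_2^2+A_3^2=0$), and matches them against the presentation of $\Gamma(2)'/\Gamma(8)$ from Lemma \ref{lemma group structure}. The identification $K' = K(E[8])$ is then \emph{deduced} from this lemma together with Lemma \ref{lemma unique quotient} (uniqueness of the quotient of $\widehat{P}_{3}$ isomorphic to $\Gamma(2)/\Gamma(8)$). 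You instead propose to establish $K' = K(E[8])$ first, by explicit halving, and then read off the Galois group from Lemma \ref{lemma key}. That ordering is not circular and would be legitimate if carried out, but its central step --- that each $B_i$ lies in $K(E[8])$ --- is exactly the hard content that the paper's architecture is designed to avoid, and your description of it is too optimistic. The radicands that actually come out of the Bekker--Zarhin halving of a $4$-torsion point are elements such as $\zeta_4 A_2 A_3$, $A_3(A_3 + \zeta_4 A_2)$, and $A_2(A_2 - \zeta_4 A_3)$ (see the $r_j^2$ computed in the proof of Proposition \ref{prop action of -1}); none of these is congruent to a single $B_i^2 = A_i(A_{i+1} + \zeta_4 A_{i+2})$ modulo squares in any visible way, so there is no single ``discriminant of a quadratic'' that matches $B_i^2$. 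What one would actually have to show is that the subgroup of $K(E[4])^{\times}/(K(E[4])^{\times})^{2}$ generated by all such radicands, over all three $2$-torsion points and their $4$-torsion preimages, coincides with $\langle B_1^2, B_2^2, B_3^2 \rangle$; that is a substantial verification you have not performed, not a routine one. Note that the paper's Remark \ref{rmk computations}, which relates the $B_i$'s to such square roots, goes in the \emph{other} direction and is only available after Proposition \ref{prop 8-torsion generators} is proved.

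A second, independent problem is the degree-$4$ case: there $E$ is the Jacobian of $y^2 = \prod_{i=1}^{4}(x - \alpha_i)$, the halving algorithm you invoke does not apply directly, and passing to a Weierstrass model changes the roots and hence the elements $A_i, B_i$. The paper handles both cases uniformly precisely because its proof of this lemma never touches torsion points --- it depends only on the relation $A_1^2 + A_2^2 + A_3^2 = 0$, which holds in both cases. (Your degree counts $[K' : K(E[4])] = [K(E[4]) : K(E[2])] = [K(E[8]) : K(E[4])] = 8$ and the specialization argument for independence of the $B_i^2$ in $K(E[4])^{\times}/(K(E[4])^{\times})^{2}$ are fine, and the final deduction from Lemma \ref{lemma key} would be valid once $K' = K(E[8])$ is in hand; the gap is confined to, but concentrated in, the containment $K' \subseteq K(E[8])$.)
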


\begin{proof}

Let $K' = K(E[2], A_{1}, A_{2}, A_{3}, B_{1}, B_{2}, B_{3})$.  Clearly $K'$ is generated over $K(E[4])$ by square roots of three elements which are independent in $K(E[4])^{\times} / (K(E[4])^{\times})^{2}$, and thus, $[K' : K(E[4])] = 8$.  Therefore, since $[K(E[4]) : K(E[2])] = 8$, we have $[K' : K(E[2])] = 64$.

Using the relation (\ref{eq identity of A_{i}'s}), for each $i$, we compute 
\begin{equation}\label{eq identity of A_{i}'s2}
(A_{i}(A_{i + 1} + \zeta_{4}A_{i + 2}))(A_{i}(A_{i + 1} - \zeta_{4}A_{i + 2})) = -A_{i}^{4}.
\end{equation}
In light of this, for $i \in \zz / 3\zz$, we define $B_{i}'$ to be the element of $K'$ such that $B_{i}'^{2} = A_{i}(A_{i + 1} - \zeta_{4}A_{i + 2})$ and $B_{i}B_{i}' = \zeta_{4}A_{i}^{2} \in K(E[2])$.  Define $\sigma \in \Gal(K' / K(E[2]))$ as the automorphism which acts by 
$$\sigma : (A_{1}, A_{2}, A_{3}, B_{1}, B_{2}, B_{3}) \mapsto (A_{1}, A_{2}, -A_{3}, B_{1}', \zeta_{4}B_{2}', \zeta_{4}B_{3}),$$
and let $\tau \in \Gal(K' / K(E[2]))$ be the automorphism which acts by 
$$\tau : (A_{1}, A_{2}, A_{3}, B_{1}, B_{2}, B_{3}) \mapsto (-A_{1}, A_{2}, A_{3}, \zeta_{4}B_{1}, B_{2}', \zeta_{4}B_{3}').$$
Note that $\sigma^{2}$ and $\tau^{2}$ both act trivially on $K(E[4])$ while sending $(B_{1}, B_{2}, B_{3})$ to $(B_{1}, B_{2}, -B_{3})$ and to $(-B_{1}, B_{2}, B_{3})$ respectively; it is now easy to check that $\sigma^{2}$ (resp. $\tau^{2}$) has order $2$ and commutes with $\tau$ (resp. $\sigma$).  One also verifies that $[\sigma, \tau]$ acts trivially on $K(E[4])$ and sends $(B_{1}, B_{2}, B_{3})$ to $(-B_{1}, -B_{2}, -B_{3})$, and that this automorphism also commutes with both $\sigma$ and $\tau$.  Thus, $\sigma$ and $\tau$ satisfy all of the relations given in (\ref{presentation of Gamma(2) / Gamma(8)}).  Moreover, $\sigma$ and $\tau$ each have order $4$, while $[\sigma, \tau]$ has order $2$.  It is elementary to verify that this implies that $\langle \sigma, \tau \rangle$ has order $32$, which is the order of $\Gamma(2)' / \Gamma(8)$; therefore $\langle \sigma, \tau \rangle \cong \Gamma(2)' / \Gamma(8)$.  Note also that $\langle \sigma, \tau \rangle$ fixes $A_{2}$, whose orbit under $\Gal(\bar{K} / K(E[2]))$ has cardinality $2$, so if $\mu$ is any automorphism in $\Gal(K' / K(E[2]))$ which does not fix $A_{2}$, then $\langle \sigma, \tau, \mu \rangle$ has order $64$ and must be all of $\Gal(K' / K(E[2]))$.  Let $\mu$ be the automorphism that acts by changing the sign of all $A_{i}$'s and all $B_{i}$'s.  Then $\mu$ commutes with $\sigma$ and $\tau$, and 
\begin{equation}\label{Gal(L' / L_{1}) is Gamma(2) / Gamma(8)}
\Gal(K' / K(E[2])) = \langle \sigma, \tau \rangle \times \langle \mu \rangle \cong \Gamma(2)' / \Gamma(8) \times \{\pm 1\} \cong \Gamma(2) / \Gamma(8).
\end{equation}

\end{proof}

The next two propositions (Propositions \ref{prop 8-torsion generators} and \ref{prop action of -1} below) imply Theorem \ref{thm 8-torsion}.

\begin{prop} \label{prop 8-torsion generators}

We have $K(E[8]) = K(E[2], A_{1}, A_{2}, A_{3}, B_{1}, B_{2}, B_{3})$.

\end{prop}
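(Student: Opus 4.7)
Set $K' := K(E[2], A_1, A_2, A_3, B_1, B_2, B_3)$. The plan is to identify $K'$ with $K(E[8])$ by showing that $K' / K(E[2])$ is unramified over the discriminant locus and has Galois group $\Gamma(2)/\Gamma(8)$, then invoking Lemma \ref{lemma unique quotient} to conclude. The Galois group calculation is already provided by Lemma \ref{lemma Galois group}, so the heart of the argument will be to verify $K' \subset K^{\unr}$.

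To prove $K' \subset K^{\unr}$, let $\mathcal{R}$ denote the integral closure of the coordinate ring of $X_d$ in $K(E[4])$; this is finite \'etale over the coordinate ring of $X_d$ by Lemma \ref{lemma key}(c). It suffices to show that each $B_i^2$ is a unit in $\mathcal{R}$, as then $\mathcal{R}[B_i]$ is an \'etale extension of $\mathcal{R}$ and hence sits inside $K^{\unr}$. Each $A_i$ is a unit in $\mathcal{R}$, since in either case the square of $A_i^2$ is a factor of the discriminant $\Delta$, which is invertible in the coordinate ring of $X_d$. For the companion factor $A_{i+1} + \zeta_4 A_{i+2}$, the identity (\ref{eq identity of A_{i}'s}) yields
\[
(A_{i+1} + \zeta_4 A_{i+2})(A_{i+1} - \zeta_4 A_{i+2}) \;=\; A_{i+1}^2 + A_{i+2}^2 \;=\; -A_i^2,
\]
which is a unit in $\mathcal{R}$; since $\mathcal{R}$ is a domain, both linear factors on the left must then be units, making $B_i^2 = A_i(A_{i+1} + \zeta_4 A_{i+2})$ a unit as well.

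With $K' \subset K^{\unr}$ established, $K'$ corresponds to some normal subgroup $N \lhd (\rho_{2}^{\unr})^{-1}(\Gamma(2)) \cong \widehat{P}_3$ (the identification of Lemma \ref{lemma key}(d) applies uniformly in both the degree-$3$ and degree-$4$ cases), and Lemma \ref{lemma Galois group} gives $\widehat{P}_3 / N \cong \Gamma(2)/\Gamma(8)$. Lemma \ref{lemma unique quotient} then forces $N = (\rho_{2}^{\unr})^{-1}(\Gamma(8))$, whose fixed field is $K(E[8])$ by Lemma \ref{lemma key}(c). Hence $K' = K(E[8])$. The main obstacle is the unramifiedness claim, and specifically the observation that $A_{i+1} + \zeta_4 A_{i+2}$ is individually a unit in $\mathcal{R}$ --- the identity (\ref{eq identity of A_{i}'s}) makes this essentially automatic; everything else is bookkeeping on top of the lemmas already proved.
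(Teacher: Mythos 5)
Your proposal is correct and follows essentially the same route as the paper: establish $K' \subset K^{\unr}$, invoke Lemma \ref{lemma Galois group} to identify $\Gal(K'/K(E[2]))$ with $\Gamma(2)/\Gamma(8)$, and apply Lemma \ref{lemma unique quotient} together with Lemma \ref{lemma key}(c)--(d) to conclude $K' = K(E[8])$. Your unit argument via $(A_{i+1}+\zeta_{4}A_{i+2})(A_{i+1}-\zeta_{4}A_{i+2}) = -A_{i}^{2}$ is just a fleshed-out version of the norm computation the paper leaves to the reader (it is the same identity as the paper's equation (\ref{eq identity of A_{i}'s2})), so there is no substantive difference.
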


\begin{proof}

As before, write $K'$ for $K(E[2], A_{1}, A_{2}, A_{3}, B_{1}, B_{2}, B_{3})$.  It is straighforward to check by computing norms that the field extension $K' / K(E[2])$ obtained by adjoining the $A_{i}$'s and $B_{i}$'s is unramified away from the discriminant locus (the union of the primes $(\alpha_{i} - \alpha_{j})$) and thus, $K'$ is a subextension of $K^{\unr} / K$.  Lemma \ref{lemma key}(d) tells us that $\Gal(K^{\unr} / K(E[2])) \cong \widehat{P}_{3}$, so the subextension $K'$ corresponds to some normal subgroup of $\widehat{P}_{3}$ inducing a quotient isomorphic to $\Gal(K' / K(E[2])) \cong \Gamma(2) / \Gamma(8)$.  Lemma \ref{lemma unique quotient} then implies that this normal subgroup of $\widehat{P}_{3}$ is the one corresponding to $(\rho_{2}^{\unr})^{-1}(\Gamma(8)) \lhd \widehat{P}_{3}$.  But Lemma \ref{lemma key}(c) says that the subextension corresponding to $(\rho_{2}^{\unr})^{-1}(\Gamma(8))$ is $K(E[8])$.  Therefore, $K' = K(E[8])$, as desired.

\end{proof}

\begin{rmk} \label{rmk computations}

We may now use Proposition \ref{prop 8-torsion generators} to compute several elements that lie in $K(E[8])$.

a) We first compute, for $i \in \zz / 3\zz$ (and with $B_{i}'$ defined as in the proof of Lemma \ref{lemma Galois group}), that 
\begin{equation} (B_{i} \pm B_{i}')^{2} = 2A_{i}A_{i + 1} \pm 2\zeta_{4}A_{i}^{2}; \ (B_{i} \pm \zeta_{4}B_{i}')^{2} = 2\zeta_{4}A_{i}A_{i + 2} \mp 2A_{i}^{2}. \end{equation}
Therefore, for each $i$, we have (up to sign changes) 
$$\sqrt{-A_{i}^{2} \pm \zeta_{4}A_{i}A_{i + 1}} = (1 \mp \zeta_{4})^{-1}(B_{i} \pm B_{i}'), \ \sqrt{\zeta_{4}A_{i}A_{i + 2} \pm A_{i}^{2}} = (\zeta_{8} + \zeta_{8}^{-1})^{-1}(B_{i} \mp \zeta_{4}B_{i}') \in K(E[8]).$$

b) We similarly compute, for $i \in \zz / 3\zz$, that  
$$2^{-1}\zeta_{4}(B_{i} - B_{i}')^{2}B_{i + 2}^{2} / (A_{i} + \zeta_{4}A_{i + 1})^{2} = (A_{i}(A_{i} + \zeta_{4}A_{i + 1}))(A_{i + 2}(A_{i} + \zeta_{4}A_{i + 1})) / (A_{i} + \zeta_{4}A_{i + 1})^{2}$$
\begin{equation}\label{eq sqrt of product of A's} = A_{i}A_{i + 2}(A_{i} + \zeta_{4}A_{i + 1})^{2} / (A_{i} + \zeta_{4}A_{i + 1})^{2} = A_{i}A_{i + 2}. \end{equation}
Therefore, for each $i$, we have $\pm\sqrt{A_{i}A_{i + 2}} = \pm(1 - \zeta_{4})^{-1}(B_{i} - B_{i}')B_{i + 2} / (A_{i} + \zeta_{4}A_{i + 1}) \in K(E[8])$.

\end{rmk}

\begin{prop} \label{prop action of -1}

Any Galois element $\sigma \in G_{K}$ with $\rho_{2}(\sigma) = -1 \in \SL(T_{2}(E))$ acts on $K(E[8])$ by changing the sign of each of the generators $A_{i}, B_{i} \in \bar{K}$.

\end{prop}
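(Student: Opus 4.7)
My plan is to reduce to a monodromy computation via Proposition~\ref{prop 4-torsion}(c).  First, I observe that by Proposition~\ref{prop 4-torsion}(b) any such $\sigma$ fixes $K(E[2])$ and sends each $A_i$ to $-A_i$.  Since $\zeta_4\in K$, direct substitution gives
\[
\sigma(B_i^2) \;=\; (-A_i)\bigl((-A_{i+1}) + \zeta_4(-A_{i+2})\bigr) \;=\; A_i(A_{i+1}+\zeta_4 A_{i+2}) \;=\; B_i^2,
\]
so $\sigma(B_i)=\epsilon_i B_i$ for some $\epsilon_i\in\{\pm 1\}$, and the remaining task is to show $\epsilon_i=-1$ for each $i$.

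The key input is Proposition~\ref{prop 4-torsion}(c): the scalar $-I\in\SL(T_2(E))$ equals $\rho_2^{\unr}(\Sigma)$ for the central full-twist braid $\Sigma=(\beta_1\beta_2)^3\in\widehat{P}_3$ (viewed inside $H\cong\widehat{P}_3$ via Lemma~\ref{lemma key}(d) in the degree-$4$ case).  Since $K(E[8])\subset K^{\unr}$ by Lemma~\ref{lemma key}(c), the restriction of $\sigma$ to $K(E[8])$ coincides with the action of $\Sigma$ under the isomorphism $G_K^{\unr}\cong\widehat{B}_d$ of Lemma~\ref{lemma key}(a).  It therefore suffices to show $\Sigma(B_i)=-B_i$.

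For this, I would realize $\Sigma$ topologically as a loop in the configuration space of three distinct points of $\cc$---the $\alpha_i$'s in the degree-$3$ case, and the $\gamma_i$'s (which govern the identification $H\cong\widehat{P}_3$ of Lemma~\ref{lemma key}(d)) in the degree-$4$ case---given by uniformly rotating the three points by angle $2\pi t$ about their centroid as $t$ runs over $[0,1]$.  Under analytic continuation along this loop, each pairwise difference is multiplied by $e^{2\pi i t}$, so each $A_i$ (a square root of one such difference) is continued to $e^{\pi i t}A_i$.  Consequently
\[
B_i^2(t) \;=\; A_i(t)\bigl(A_{i+1}(t)+\zeta_4 A_{i+2}(t)\bigr) \;=\; e^{2\pi i t}\,B_i^2(0),
\]
which is a loop of winding number $1$ around the origin of $\cc^*$; hence $B_i=\sqrt{B_i^2}$ is continued to $e^{\pi i}B_i=-B_i$ at $t=1$.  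This gives $\Sigma(B_i)=-B_i$, as required.

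The main obstacle is pinning down the topological representative of $\Sigma$ inside the correct three-point configuration space---straightforward in the degree-$3$ case, but in the degree-$4$ case it requires working in the configuration space of the $\gamma_i$'s (rather than of the four $\alpha_i$'s) via the identification of Lemma~\ref{lemma key}(d).  Once that picture is in hand, the winding-number bookkeeping is immediate and the sign computation is forced.
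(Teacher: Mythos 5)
Your proof is correct, but it closes the argument by a genuinely different route than the paper. Both arguments begin identically: the signs of the $A_{i}$'s come from Proposition \ref{prop 4-torsion}(b), and the whole issue is the sign on the $B_{i}$'s, equivalently whether $\bar{\rho}_{8}(\sigma)$ is $-1$ or $3$. The paper resolves this algebraically: it notes that the all-signs-changed automorphism is central in $\Gal(K(E[8])/K)$, hence maps to a scalar in $\SL(E[8])$, and then uses the Bekker--Zarhin halving formulas to produce an explicit point $Q$ of order $8$ (in the degree-$3$ case) whose $x$- and $y$-coordinates are homogeneous of degrees $2$ and $3$ in the $A_{i}$'s, $B_{i}$'s and $B_{i}'$'s, so that the sign change sends $Q \mapsto -Q$; the degree-$4$ case is then deduced by transporting the action of $\Sigma$ through $H \cong \widehat{P}_{3}$. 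You invert the logic: since $\rho_{2}^{\unr}(\Sigma) = -1$ by Proposition \ref{prop 4-torsion}(c) and $\ker \rho_{2}^{\unr}$ fixes $K(E[8])$, any $\sigma$ with $\rho_{2}(\sigma) = -1$ restricts to $K(E[8])$ exactly as $\Sigma$ does, and you compute the action of $\Sigma$ as the monodromy of the full-twist loop, extracting $B_{i} \mapsto -B_{i}$ from the winding number of $B_{i}^{2}$. This avoids the explicit order-$8$ point entirely and treats the two degree cases uniformly (rotating the $\gamma_{i}$'s in the degree-$4$ case, which is precisely the identification underlying Lemma \ref{lemma key}(d)); the cost is that you must justify that the Galois action of $\Sigma$ on these algebraic functions is analytic continuation along the rigid-rotation loop. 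That is the standard Riemann-existence dictionary already invoked in Lemma \ref{lemma key}(a), and since the output is a sign it is insensitive to orientation and composition conventions, so there is no gap. A minor remark: your opening computation $\sigma(B_{i}^{2}) = B_{i}^{2}$ is subsumed by the monodromy calculation and tacitly uses $\sigma(\zeta_{4}) = \zeta_{4}$, which holds because $\det(-1) = 1$ and the Weil pairing is Galois-equivariant (or simply because $\zeta_{4} \in K$ after the reduction of Lemma \ref{lemma descent}).
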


\begin{proof}

Let $\sigma \in G_{K}$ be an automorphism which acts on $K(E[8])$ by changing the sign of each generator $A_{i}, B_{i} \in \bar{K}$.  Then it follows from Proposition \ref{prop 4-torsion}(b) that $\bar{\rho}_{4}(\sigma)$ is the scalar $-1 \in \SL(E[4])$.  Moreover, we observe that the restriction of $\sigma$ to $K(E[8])$ lies in the center of $\Gal(K(E[8]) / K)$, so $\bar{\rho}_{8}(\sigma)$ is a scalar automorphism in $\SL(E[8])$, either $-1$ or $3$.  In order to determine which scalar it is, we first treat the degree-$3$ case and compute a point of order $8$ in $E(\bar{K})$.  To simplify computations, we instead work with the elliptic curve $E'$ defined by $y^{2} = x(x - (\alpha_{2} - \alpha_{1}))(x - (\alpha_{3} - \alpha_{1}))$, which is isomorphic to $E$ over $K(E[2])$ via the morphism $(x, y) \mapsto (x - \alpha_{1}, y)$.  (We note that replacing the roots $\alpha_{i}$ with the new roots $\alpha_{i}' := \alpha_{i} - \alpha_{1}$ in the formulas for $A_{i}, B_{i} \in \bar{K}$ does not change the elements $A_{i}, B_{i} \in \bar{K}$.)

Given a point $(x_{0}, y_{0}) \in E(\bar{K})$, in \cite[\S2]{bekker2017divisibility}, Bekker and Zarhin describe an algorithm to find a point $P \in E(\bar{K})$ with $2P = (x_{0}, y_{0})$.  In order to find such a point, one chooses elements $r_{1}, r_{2}, r_{3} \in \bar{K}$ with $r_{i}^{2} = x_{0} - \alpha_{i}$ for $i = 1, 2, 3$ and with $r_{1}r_{2}r_{3} = -y_{0}$.  Then 
$$P := (x_{0} + (r_{1}r_{2} + r_{2}r_{3} + r_{3}r_{1}), -y_{0} + (r_{1} + r_{2} + r_{3})(r_{1}r_{2} + r_{2}r_{3} + r_{3}r_{1}))$$
 satisfies $2P = (x_{0}, y_{0})$.  Following this algorithm, we get a point $P$ of order $4$ with $2P = (0, 0)$ given by $P = (\zeta_{4}A_{2}A_{3}, \zeta_{4}A_{2}A_{3}(A_{2} + \zeta_{4}A_{3}))$.  Similarly, we get a point $Q$ of order $8$ with $2Q = P$ given by 
$$Q = (\zeta_{4}A_{2}A_{3} + (r_{1}r_{2} + r_{2}r_{3} + r_{3}r_{1}), -\zeta_{4}A_{2}A_{3}(A_{2} + \zeta_{4}A_{3}) + (r_{1} + r_{2} + r_{3})(r_{1}r_{2} + r_{2}r_{3} + r_{3}r_{1})),$$
 where $r_{1}, r_{2}, r_{3} \in \bar{K}$ are elements satisfying 
$$r_{1}^{2} = \zeta_{4}A_{2}A_{3} - \alpha_{1}' = \zeta_{4}A_{2}A_{3}; \ r_{2}^{2} = \zeta_{4}A_{2}A_{3} - \alpha_{2}' = \zeta_{4}A_{2}A_{3} + A_{3}^{2}; \ r_{3}^{2} = \zeta_{4}A_{2}A_{3} - \alpha_{3}' = \zeta_{4}A_{2}A_{3} - A_{2}^{2};$$
 and $r_{1}r_{2}r_{3} = -\zeta_{4}A_{2}A_{3}(A_{2} + \zeta_{4}A_{3})$.  Using the formulas computed in Remark \ref{rmk computations}, we see that one may choose 
$$r_{1} \in \{\pm(\zeta_{8} - \zeta_{8}^{-1})^{-1}(B_{3} - B_{3}')B_{2} / (A_{3} + \zeta_{4}A_{1})\}, \ r_{2} \in \{\pm(\zeta_{8} + \zeta_{8}^{-1})^{-1}(B_{3} - \zeta_{4}B_{3}')\}, \ r_{3} \in \{\pm(1 - \zeta_{4})^{-1}(B_{2} + B_{2}')\}$$
 with $r_{1}r_{2}r_{3}$ as specified above.  It follows that the $x$-coordinate (resp. the $y$-coordinate) of $Q$ can be written as a quotient of homogeneous polynomial functions in the $A_{i}$'s, $B_{i}$'s, and $B_{i}'$'s with coefficients in $K$ whose degree (the degree of the numerator minus the degree of the denominator) is $2$ (resp. $3$).  Therefore, since $\sigma$ changes the sign of each $B_{i}'$ as well as each of the $A_{i}$'s and $B_{i}$'s, we see that $\sigma$ fixes the $x$-coordinate of $Q$ while changing the sign of the $y$-coordinate, so $\sigma(Q) = -Q$.  Thus, $\rho_{2}(\sigma) = -1 \in \SL(T_{2}(E))$, and we have proven the statement of the proposition for the degree-$3$ case.  In particular, we see from Proposition \ref{prop 4-torsion}(c) that if we choose $\sigma \in G_{K}$ to be an automorphism whose image under the restriction map to $G_{K}^{\unr} \cong \widehat{P}_{3}$ is the central element $\Sigma \in \widehat{P}_{3}$, then $\sigma$ acts on $K(E[8])$ by changing the sign of each of the $A_{i}$'s and $B_{i}$'s.

In the degree-$4$ case, similarly let $\sigma \in \Gal(\bar{K} / K(E[2]))$ be an automorphism whose image under the restriction map to $\Gal(K^{\unr} / K(E[2])) \cong \widehat{P}_{3}$ is the central element $\Sigma \in \widehat{P}_{3}$ as defined in the statement of Lemma \ref{lemma exponent 2 abelian}(a).  Then it follows from what was remarked at the end of the last paragraph that $\sigma$ again acts on $K(E[8])$ by changing the sign of each of the $A_{i}$'s and $B_{i}$'s, so to prove the theorem for this case it suffices to show that $\rho_{2}(\sigma) = -1 \in \SL(T_{2}(E))$.  But this is given by Proposition \ref{prop 4-torsion}(c).

\end{proof}

The author is grateful to Yuri Zarhin for his guidance in supervising the author's investigation of $8$-torsion of elliptic curves during his time at The Pennsylvania State University.  The author would also like to thank the referee for pointing out a gap in an earlier version of this work.

\bibliographystyle{plain}
\bibliography{bibfile}

\end{document}